\definecolor{gray}{rgb}{0.25, 0.25, 0.25}
\newtheorem{theorem}{Theorem}[section]
\newtheorem{lemma}[theorem]{Lemma}
\newtheorem{cor}[theorem]{Corollary}
\newtheorem*{observation*}{Observation}
\newtheorem*{question*}{Question}
\newenvironment{definition*}
  {
   \innerdefinition}
  {\endinnerdefinition}
\theoremstyle{definition}
\theoremstyle{remark}
\newcommand{\ex}{\textrm{ex}}
\newcommand{\EX}{\textrm{EX}}
\newcommand{\spex}{\textrm{spex}}
\title{Spectral extremal problem for the odd prism}
\author{{Xinhui Duan,  Lu Lu\footnote{Corresponding author.}\setcounter{footnote}{-1}\footnote{E-mail address:duanxinhui01@163.com (X. Duan); lulugdmath@163.com (L. Lu).}}\\[2mm]
\small School of Mathematics and Statistics, Central South University,\\
\small Changsha, Hunan, 410083, China\\
}
\date{}
\begin{document}
\maketitle

\begin{abstract}
The spectral Tur\'an number $\spex(n, F)$ denotes the maximum spectral radius $\rho(G)$ of an $F$-free graph $G$ of order $n$. This paper determines $\spex\left(n, C_{2k+1}^{\square}\right)$ for all sufficiently large $n$, establishing the unique extremal graph. Here, $C_{2k+1}^{\square}$ is the odd prism, which is the Cartesian product $C_{2k+1} \square K_2$, where the Cartesian product $G \square F$ has vertex set $V(G) \times V(F)$, and edges between $(u_1,v_1)$ and $(u_2,v_2)$ if either $u_1 = u_2$ and $v_1v_2 \in E(F)$, or $v_1 = v_2$ and $u_1u_2 \in E(G)$.\\

\noindent {\it AMS Classification}: 05C50\\[1mm]
\noindent {\it Keywords}: Spectral radius, Spectral extremal graph, Tur\'an problem.
\end{abstract}

\section{Introduction}
In this paper, all graphs considered are undirected, finite and simple. Let $G$ be a graph with vertex set $V(G)$ and edge set $E(G)$. Let $K_n$ be the {\it complete graph} on $n$ vertices, and $K_{s,t}$ be the {\it complete bipartite graph} with parts of sizes $s$ and $t$. We write $C_n$ and $P_n$ for the {\it cycle} and {\it path} on $n$ vertices, respectively. The {\it complement} $\overline{G}$ of $G$ is the graph with vertex set $V(G)$ and edge set $\{uv\mid uv\notin E(G)\}$. For two vertex-disjoint graphs $G$ and $H$, the {\it union} of $G$ and $H$ is the graph $G\cup H$ with vertex set $V(G)\cup V(H)$ and edge set $E(G)\cup E(H)$. In particular, we write $kG$ to denote the vertex-disjoint union of $k$ copies of $G$. The {\it join} of $G$ and $H$, denoted by $G \vee H$, is the graph obtained from $G \cup H$ by adding edges between every vertex of $G$ and every vertex of $H$. For positive integers $n_1,\cdots, n_r$, the complete $r$-partite graph $K_{n_1,\cdots,n_r}$ is defined as $\overline{K_{n_1}\cup \cdots \cup K_{n_r}}$. The $r$-partite Tur\'{a}n graph on $n$ vertices, $T_{n,r}$, is the completer-partite graph $K_{n_1,\cdots,n_r}$ with $\max\{|n_i-n_j| \mid 1\leq i,j\leq r\}\leq 1$. The Cartesian product of graphs $G$ and $F$, denoted by $G\square F$, has vertex set $V(G)\times V(F)$, in which two distinct vertices $(u_1, v_1)$ and $(u_2, v_2)$ are adjacent in $G\square F$ if either $u_1 = u_2$ and $v_1v_2\in E(F)$, or $v_1 = v_2$ and $u_1u_2\in E(G)$. 

A graph $G$ is $F$-free if it contains no subgraph isomorphic to $F$. For a family of graphs $\mathcal{F}$, $G$ is $\mathcal{F}$-free if it is $F$-free for every $F \in \mathcal{F}$. The Tur\'an number $\ex(n,F)$ is defined as
 \[\textrm{ex}(n, F)=\max\left\{e(G) \mid G \in \mathcal{G}_n \text{ and } G \text{ is } F\text{-free}\right\},\] with $\EX(n, F)$ denoting the set of extremal graphs: 
\[\textrm{EX}(n, F) = \left\{G \in \mathcal{G}_n \mid G \text{ is }F\text{-free and }e(G) = \textrm{ex}(n, F)\right\}.\] 
The classical Tur\'an theorem states $\EX(n,K_{r+1}) = \{ T_{n,r} \}$. Since $K_4$ is the tetrahedron, Turán further posed an analogous problem: Determine the Tur\'an number for graphs defined by the vertices and edges of regular polyhedrons. Examples include the cube $Q_8$ and octahedron $O_6$ (Fig. \ref{fig-1-1}). This problem has been extensively studied: see \cite{ES, F1,J,JMY,PS} for $Q_8$; \cite{ES1} for $O_6$; \cite{S1} for the dodecahedron $D_{20}$; and \cite{S2} for the icosahedron $I_{12}$. Related work includes Tur\'an numbers for wheels (Dzido \cite{D1}, Dzido and Jastrzebski \cite{DJ}, Yuan \cite{Y1}) and extremal graphs for cycyle (Ore \cite{Ore}, F\"uredi and Gunderson \cite{FG}).

\begin{figure}[H]
     \centering
     \includegraphics[scale=0.8]{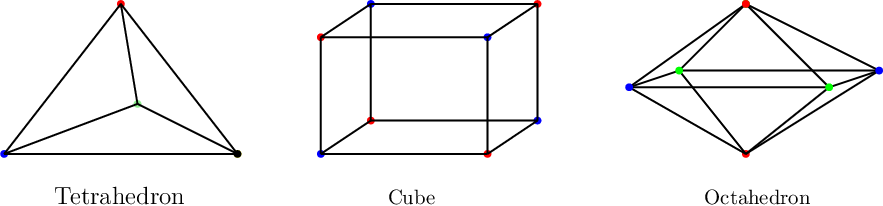}
     \caption{The three regular polyhedrons.}
     \label{fig-1-1}
\end{figure}

In 2022, Brada\u{c}, Janzer, Sudakov and Tomon \cite{BJST2023} studied the Tur\'{a}n number of the grid $P_t\square P_t$, where $P_t$ is the path on $t$ vertices. More precisely, the proved that for $t\geq 2$, threr exist positive numbers $C_1$ and $C_2$ depending only on $t$ such that $C_1n^{3/2}\leq \ex(n,P_t\square P_t)\leq C_2n^{3/2}$. The {\it odd prism} $C_{2k+1}^{\square}$ is defined as $C_{2k+1}^{\square}:=C_{2k+1}\square K_2$. The triangular prism is shown in Fig. \ref{fig-1-2}. He, Li and Feng \cite{HLF2025} consider the Tur\'{a}n number of $C_{2k+1}^{\square}$.
\begin{figure}[H]
     \centering
     \includegraphics[scale=0.7]{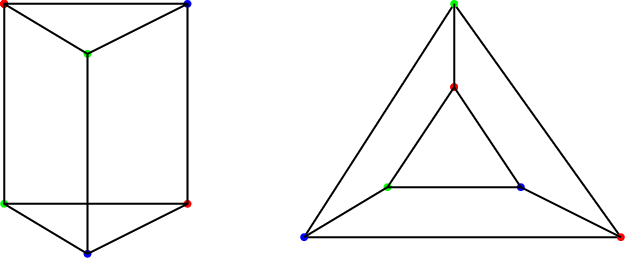}
     \caption{The $C_{3}^{\square}$ graph.}
     \label{fig-1-2}
\end{figure}
\begin{theorem}[\cite{HLF2025}]\label{thm-1-1}
    Let $k\geq 1$ be fixed and $n$ be sufficiently large. Then
    $$
    \ex(n,C_{2k+1}^{\square})=\underset{n_a+n_b=n}{\max}\left\{n_a(1+n_b)+\frac{1}{2}(j^2-3j):j\in\{0,1,2\},j\equiv n_a(\bmod3)   \right\}.
    $$
Moreover, all extremal graphs for $C_{2k+1}^{\square}$ are of the form of a complete bipartite graph $K_{n_a,n_b}$ with an extremal graph for $P_4$ added to the part of size $n_a$.
\end{theorem}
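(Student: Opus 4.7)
\emph{Lower bound.} The candidate extremal graph $G^*(n_a,n_b)$ is $K_{n_a,n_b}$ with an extremal $P_4$-free graph $H$ placed on the $n_a$-part $A$. Since $P_4$-free graphs are disjoint unions of triangles and stars, the Erd\H{o}s--Gallai bound gives $e(H)=n_a+\tfrac{1}{2}(j^2-3j)$ with $j\in\{0,1,2\}$ and $j\equiv n_a\pmod 3$, yielding the claimed edge count $n_a(1+n_b)+\tfrac{1}{2}(j^2-3j)$. To check $G^*$ is $C_{2k+1}^\square$-free I would traverse an arbitrary odd cycle of $G^*$: its $H$-edges have odd total, and its vertices form maximal ``$A$-runs'' alternating with single vertices of $B$. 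Because $H$ is $P_4$-free, each $A$-run is a subpath of length at most two of a single triangle or star component of $H$. Two vertex-disjoint copies of $C_{2k+1}$ joined by a perfect matching then force simultaneous compatibility between the $H$-components used by the two cycles, which a short case analysis on ``pure'' versus ``mixed'' odd cycles rules out.

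\emph{Upper bound: stability and cleanup.} Since $\chi(C_{2k+1}^\square)=\max(\chi(C_{2k+1}),\chi(K_2))=3$, Erd\H{o}s--Stone gives $\ex(n,C_{2k+1}^\square)=(1+o(1))n^2/4$, and Erd\H{o}s--Simonovits stability implies that any near-extremal $C_{2k+1}^\square$-free graph $G$ is $o(n^2)$-close in edit distance to $T_{n,2}$. Fixing a maximum cut $(A,B)$ of $G$, standard pruning arguments give $\bigl||A|-|B|\bigr|=O(1)$ and deposit in a ``junk set'' $W$ of size $o(n)$ all vertices with cross-degree below $(1-\varepsilon)n/2$. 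On $G-W$, all but $o(n^2)$ pairs in $(A\setminus W)\times(B\setminus W)$ are edges and every same-side pair has $\Theta(n)$ common neighbours on the opposite side.

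\emph{Key structural step (the main obstacle).} The decisive step is to show that after cleanup $G[A\setminus W]$ is $P_4$-free and $G[B\setminus W]$ is empty. Assuming for contradiction a path $x_1x_2x_3x_4$ inside $A$, I would explicitly embed $C_{2k+1}^\square$ as follows. Greedily extend $x_1x_2$ to a cycle $C^{(1)}$ of length $2k+1$ using $k-1$ fresh $A$-vertices and $k$ fresh $B$-vertices, and independently extend $x_3x_4$ to a vertex-disjoint cycle $C^{(2)}$ of the same form; the abundance of common cross-neighbours makes each greedy choice succeed. A parity count on the required matching between $C^{(1)}$ and $C^{(2)}$ (with $2k+2$ $A$-endpoints, $2k$ $B$-endpoints, and no $B$-$B$ edges available) forces exactly one matching edge inside $A$ and the remaining $2k$ across the bipartition. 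Taking the dihedral correspondence $v_i\mapsto v'_{2-i\bmod(2k+1)}$, obtained by reflecting $C^{(2)}$ to send $x_3$ to $x_2$, realises the unique interior matching edge as the $P_4$-edge $x_2x_3$, and the remaining $2k$ matching edges become cross edges which can be arranged to exist by an intersection-of-neighbourhoods argument over $B$. An entirely parallel argument shows that a single edge inside $B$ also produces $C_{2k+1}^\square$, ruling out any such edge. Once the bipartite part is complete and one side is $P_4$-free, a vertex-by-vertex re-absorption of $W$ shows each re-insertion strictly decreases the edge count unless the vertex joins the $P_4$-free structure, forcing $W=\emptyset$ and $G\cong G^*(n_a,n_b)$; optimising the edge count over $n_a+n_b=n$ and $j\equiv n_a\pmod 3$ yields both the stated value of $\ex(n,C_{2k+1}^\square)$ and the uniqueness of the extremal graph.
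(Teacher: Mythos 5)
This statement is quoted from \cite{HLF2025}; the present paper never proves it, so the only comparison available is with the toolkit the paper builds for the spectral analogue (Lemma \ref{lem-2-4}, Corollary \ref{cor-2-5}, Lemma \ref{lem-3-8}). Measured against that, your outline has the right general shape (stability plus a ``$P_4$ in one side forces the prism'' embedding), but it contains genuine gaps. The most concrete one: your claim that ``a single edge inside $B$ also produces $C_{2k+1}^{\square}$'' is false. The graph $K_{n_a,n_b}$ with one edge (or any $P_4$-free graph) added inside $B$ is exactly the extremal construction with the roles of the two sides swapped, hence prism-free; what is actually true, and what the structure theorem needs, is that an edge inside $A$ and an edge inside $B$ joined by two cross edges into a $C_4$ force the prism (this is the configuration forbidden in Lemma \ref{lem-3-8}(i), and it is what ultimately pushes all same-side edges onto one part). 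A coloring analysis of the prism shows that any $2$-coloring with only two monochromatic edges must place them as such a ``parallel'' red edge and blue edge on a common $C_4$; so one cannot shortcut this step by symmetry with the $P_4$ argument. Second, your embedding scheme has the quantifiers in the wrong order: you build the two $(2k+1)$-cycles $C^{(1)},C^{(2)}$ greedily and independently and only afterwards ask for the $2k$ cross matching edges between \emph{already chosen} vertices. Density of the bipartite part does not give adjacency between prescribed pairs; the construction must choose each matched pair simultaneously, column by column, from common neighbourhoods (this is exactly how the paper builds $P_{2k-1}^{\square}$ inside the proof of Lemma \ref{lem-3-8}), and your ``intersection-of-neighbourhoods'' remark does not repair this as written.

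Two further points where the proposal asserts rather than proves. The $C_{2k+1}^{\square}$-freeness of the candidate graph is not a ``short case analysis'': it is precisely the content of Lemma \ref{lem-2-4}/Corollary \ref{cor-2-5} here (an induction over twelve forbidden patterns in an odd cyclic $\{a,b,c,d\}$-sequence), and your ``pure versus mixed odd cycles'' sentence does not supply it. And the passage from the approximate structure to the exact value and the uniqueness of extremal graphs --- handling the exceptional set $W$, ruling out simultaneously having edges in both parts when some cross edges may be missing, and the ``re-absorption of $W$ strictly decreases the edge count'' claim --- is exactly where the work in an exact Tur\'an theorem lies; stability only gives the asymptotics, and the balance $\bigl||A|-|B|\bigr|=O(1)$ is not available at the stage where you invoke it (stability gives $o(n)$, and near-balance emerges only from the final optimisation). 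As it stands the proposal is a plausible programme, not a proof.
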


Given a graph $G$, let $A(G)$ be its adjacency matrix, and $\lambda(G)$ its spectral radius. The {\it spectral Tur\'an number} for a  graph $F$ is defined as
 \[\textrm{spex}(n, F)=\max\left\{\lambda(G) \mid G \in \mathcal{G}_n \text{ and } G \text{ is } F\text{-free}\right\},\]
 with $ \textrm{SPEX}(n, F)$ denoting the set of extremal graphs: 
 \[\textrm{SPEX}(n, F) = \left\{G \in \mathcal{G}_n \mid  G \text{ is } F\text{-free and }\lambda(G) = \textrm{spex}(n, F)\right\}.\] 
 This problem has attracted significant attention over the past decades, with results established for numerous specific graphs $F$; comprehensive surveys are found in \cite{CZ,LLF,N1}. Specially, Nikiforov \cite{Nikifrov2008} determined $\spex(n, C_{2k+1})$ for sufficiently large $n$. Nikiforov \cite{Nikiforov2007} and Zhai and Wang \cite{ZW} determined the unique extremal graph with respect to $\spex(n, C_4)$ for odd and even $n$, respectively. Subsequently, Zhai and Lin \cite{ZW} determined the unique extremal graph with respect to $\spex(n, C_6)$. Very recently, Cioab\u{a}, Desai and Tait \cite{CDT} determined the unique extremal graph with respect to $\spex(n, C_{2k})$ for $k \geq 3$ and $n$ large enough.

In this paper, we completely determine $\textrm{SPEX}(n, C_{2k+1}^{\square})$ for large enough $n$.
\begin{theorem}\label{thm-1-2}
    Let $k\geq 1$ be integer. Let $n$ be an sufficiently large integer, and $G$ a $n$-vertex $C_{2k+1}^{\square}$-free graph. Then $$\lambda(G)\leq \lambda(K_1\vee T_{n-1,2}),$$
    with equality holds if and only if $G=K_1\vee T_{n-1,2}$.
\end{theorem}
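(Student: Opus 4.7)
The plan follows the standard spectral Tur\'an methodology: construct the lower bound, use stability to approximate the structure, and then perform local surgery via the Perron eigenvector to pin down the exact extremal graph.

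\textbf{Lower bound.} First I would verify that $K_1\vee T_{n-1,2}$ is $C_{2k+1}^{\square}$-free. The odd prism $C_{2k+1}\square K_2$ has the property that every vertex lies in exactly one of its two $C_{2k+1}$-layers, so deleting any single vertex leaves a graph still containing a copy of $C_{2k+1}$, hence non-bipartite. Consequently no graph of the form $\{v\}\vee H$ with $H$ bipartite can contain $C_{2k+1}^{\square}$ as a subgraph, so $\spex(n,C_{2k+1}^{\square})\ge\lambda_0:=\lambda(K_1\vee T_{n-1,2})$. A short calculation with the characteristic equation of the complete tripartite graph $K_{1,a,b}$ gives $\lambda_0 = n/2 + O(1)$, which is strictly larger than $\lambda(T_{n,2})=\sqrt{\lfloor n^2/4\rfloor}$.

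\textbf{Stability.} Let $G^*$ be a spectral extremal $C_{2k+1}^{\square}$-free graph on $n$ vertices, $\lambda := \lambda(G^*) \ge \lambda_0$, and let $\B{x}$ be its Perron eigenvector normalized by $\max_v x_v = x_{u^*} = 1$. The bound $\lambda \le \sqrt{2e(G^*) - n + 1}$ combined with Theorem~\ref{thm-1-1} forces $e(G^*) = n^2/4 + \Theta(n)$, placing $G^*$ in the stability regime. I would then extract from the proof of Theorem~\ref{thm-1-1} (or prove directly) a stability statement: every $C_{2k+1}^{\square}$-free graph with $n^2/4 - o(n^2)$ edges admits a vertex partition $V = A \sqcup B$ with $|A|, |B| = n/2 + o(n)$ and $e(A) + e(B) = o(n^2)$, so that $G^*$ is within $o(n^2)$ edges of a balanced complete bipartite graph.

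\textbf{Local structure and uniqueness.} The bulk of the work is upgrading this approximate structure to the exact one via the eigenvector equations $\lambda x_v = \sum_{w \sim v} x_w$. I would argue in sequence: (i) every non-dominant vertex $v \ne u^*$ has weight $x_v = 1/2 + o(1)$; (ii) $u^*$ is adjacent to every other vertex, since otherwise a standard local swap (delete a ``bad'' edge inside $A$ or $B$ and add a missing edge incident to $u^*$, while preserving $C_{2k+1}^{\square}$-freeness) would strictly increase $\lambda$; (iii) the subgraph $G^*-u^*$ is bipartite. Step (iii) is the principal technical obstacle: assuming an odd cycle inside $G^*-u^*$, one would use the nearly-complete bipartite structure of $G^*-u^*$---each vertex of $A$ has $(1-o(1))|B|$ neighbors in $B$ and vice versa---to first locate a copy of $C_{2k+1}$, and then to build a vertex-disjoint parallel $C_{2k+1}$ together with the matching required by $C_{2k+1}^{\square}$, entirely within $G^*-u^*$. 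This embedding, which must carefully track parity around both prism layers, is in the spirit of the arguments of Nikiforov~\cite{Nikifrov2008} for $C_{2k+1}$ and Cioab\u a--Desai--Tait~\cite{CDT} for $C_{2k}$. Once $G^*\subseteq K_1\vee K_{a,b}$ with $a+b=n-1$ is established, maximality of $\lambda$ promotes the inclusion to equality, and among $\{K_1\vee K_{a,b} : a+b=n-1\}$ the spectral radius is uniquely maximized by the balanced partition $|a-b|\le 1$, giving $G^*=K_1\vee T_{n-1,2}$.
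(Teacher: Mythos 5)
Your lower bound and the final balancing step (maximizing $\lambda$ over $K_1\vee K_{a,b}$) are fine, but the core of the plan fails at step (iii). You claim that if $G^*-u^*$ is non-bipartite, the nearly-complete-bipartite structure lets you embed $C_{2k+1}^{\square}$ entirely within $G^*-u^*$. This is false: take $K_{a,b}$ (balanced, $a+b=n-1$) and add vertex-disjoint triangles inside the part of size $a$. Every vertex has $(1-o(1))n/2$ neighbours on the opposite side, the graph is far from bipartite, and yet it is $C_{2k+1}^{\square}$-free --- by Theorem~\ref{thm-1-1} this is essentially the \emph{edge}-extremal graph. So no embedding argument of the kind you sketch can force bipartiteness of $G^*-u^*$: combinatorially, disjoint triangles inside a part are exactly as admissible as a star, and the two can only be separated \emph{spectrally}. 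That is what the paper does: it first shows each part induces a disjoint union of stars and triangles (Corollary~\ref{cor-3-10}, Lemma~\ref{lem-3-14}), then eliminates the triangles by comparing characteristic polynomials of quotient matrices ($f_r$ versus $f_0$ in Case~1), and it needs the colouring lemma (Lemma~\ref{lem-2-4}/Corollary~\ref{cor-2-5}) to certify that each surgered graph is still prism-free. Your step (ii) (``$u^*$ dominates, else a standard local swap increases $\lambda$'') hides the same difficulty: before the star-versus-triangle structure is known you cannot certify that the swapped graph avoids the prism, and a competitor such as $K_{a,b}$ plus triangles has no dominating vertex at all, so ruling it out is a global spectral comparison, not a local swap at $u^*$; likewise your claim (i) that all non-dominant weights are $1/2+o(1)$ presupposes the very structure being proved.

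There is also a gap at the stability step: from $\lambda\ge n/2$ and $\lambda\le\sqrt{2e(G^*)-n+1}$ you only get $e(G^*)$ of order $n^2/8$, not $n^2/4-o(n^2)$, so you are not entitled to invoke an edge-stability statement for prism-free graphs. The correct input is a spectral stability theorem for $F$-free graphs with $\chi(F)=3$, i.e.\ Lemma~\ref{lem-2-1}, which is exactly how the paper obtains the partition of Lemma~\ref{lem-3-2}; this part of your argument is repairable by citation, unlike the bipartiteness step above.
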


\section{Preliminary}
Let $G$ be a simple connected graph on $n$ vertices with vertex set $V(G)$ and edge set $E(G)$. For a vertex $v \in V(G)$ and a subset $S \subseteq V(G)$, let $N(v)$ be the neighborhood of $v$, $N_S(v) = N(v) \cap S$, and $N(S) = \bigcup_{u \in S} N(u)$. Denote $d(v) = |N(v)|$ as the degree of $v$, and $d_S(v) = |N_S(v)|$ as the degree of $v$ in $S$. For $V_1, V_2 \subseteq V(G)$, let $E(V_1, V_2)$ denote the set of edges of $G$ with one endpoint in $V_1$ and the other endpoint in $V_2$, and $e(V_1, V_2) = |E(V_1, V_2)|$. Denote $G - S$ as the graph obtained from $G$ by deleting all vertices in $S$ along with their incident edges. This subgraph is also called the subgraph induced by $\overline{S} = V(G) \setminus S$ and is denoted by $G[\overline{S}]$. Particularly, if $S = \left\{u\right\}$, then we write $G - u$ for $G - S$. The adjacency matrix of $G$ is $A(G) = (a_{ij})_{n \times n}$, where $a_{ij} = 1$ if $ij \in E(G)$, and $a_{ij} = 0$ otherwise. The spectral radius of $G$ is the largest eigenvalue of $A(G)$, denoted by $\lambda(G)$. For a connected graph $G$ on $n$ vertices, by the famous Perron-Frobenius theorem, there exists a positive eigenvector $\mathbf{x} = (x_1, \ldots, x_n)^\mathrm{T}$ corresponding to $\lambda(G)$, with the eigenvector normalized such that $\max \left\{x_i \mid 1 \le i \le n\right\} = 1$ ; that is called the {\it Perron vector} of $G$. To obtain our result, we need the following lemmas.

\begin{lemma}[\cite{DKLNTW}]\label{lem-2-1}
     Let $F$ be a graph with chromatic number $\chi(F) = r+1$.  For every $\varepsilon > 0$, there exist $\delta > 0$ and $n_{0}$ such that if $G$ is an $F$-free graph on $n\ge n_0$  vertices with $\lambda ( G) \ge \left ( 1- \frac 1r- \delta \right ) n$, then $G$ can be obtained from $T_{n, r}$ by adding and deleting at most $\varepsilon n^{2}$ edges.
\end{lemma}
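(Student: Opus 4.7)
The plan is to convert the spectral hypothesis into an edge-count hypothesis and then invoke the classical Erd\H{o}s--Simonovits edge-stability theorem for $F$-free graphs. Specifically, the goal is to upgrade $\lambda(G)\ge(1-\tfrac{1}{r}-\delta)n$ to $e(G)\ge \mathrm{ex}(n,F)-\varepsilon' n^2$ for some $\varepsilon'$ tending to $0$ with $\delta$, after which edge-stability delivers the claimed $\varepsilon n^2$-close structure to $T_{n,r}$.

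The key spectral input I would use is Nikiforov's spectral Tur\'an inequality: every $K_{r+1}$-free graph $H$ satisfies $\lambda^2(H)\le 2(1-\tfrac{1}{r})\,e(H)$. Since $G$ is only assumed $F$-free, the gap between $F$-free and $K_{r+1}$-free must be bridged; I would do this via Szemer\'edi's regularity lemma. Apply regularity to $G$ with a small parameter $\eta$ and let $R$ be the reduced graph on the $M$ non-exceptional clusters, keeping only pairs that are $\eta$-regular of density at least $\eta$. Because $\chi(F)=r+1$, the graph $F$ embeds into every sufficiently balanced blow-up of $K_{r+1}$; by the counting (embedding) lemma, a copy of $K_{r+1}$ in $R$ would force a copy of $F$ in $G$, contradicting $F$-freeness. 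Hence $R$ is $K_{r+1}$-free. A standard spectral comparison between $G$ and the weighted blow-up of $R$---in which the discarded cluster, irregular pairs, and low-density pairs each contribute only $o(n^2)$ edges and $o(n)$ to the spectral radius---then transports Nikiforov's inequality to $G$ up to error terms, yielding $\lambda^2(G)\le 2(1-\tfrac{1}{r})\,e(G)+o(n^2)$.

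Substituting the hypothesis $\lambda(G)\ge(1-\tfrac{1}{r}-\delta)n$ produces $e(G)\ge(1-\tfrac{1}{r}-\varepsilon'(\delta))\,n^2/2$, and by Erd\H{o}s--Stone--Simonovits we have $\mathrm{ex}(n,F)=(1-\tfrac{1}{r}+o(1))\,n^2/2$, so $e(G)\ge \mathrm{ex}(n,F)-\varepsilon'' n^2$. The classical Erd\H{o}s--Simonovits edge-stability theorem then yields that $G$ can be obtained from $T_{n,r}$ by adding and deleting at most $\varepsilon n^2$ edges, provided $\delta$ and the regularity parameters are chosen small enough in terms of $\varepsilon$.

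The main obstacle is the quantitative calibration of the regularity step. Combinatorially, the counting lemma needs the blow-up size large enough to embed $F$, so that any $K_{r+1}$ in $R$ truly forces $F$ in $G$ rather than merely a $K_{r+1}$. Spectrally, the comparison between $\lambda(G)$ and the weighted blow-up of $R$ must absorb contributions from exceptional vertices, irregular pairs, and low-density pairs into the $o(n^2)$ slack, without which the transfer of Nikiforov's inequality to $G$ breaks down. These two technical pieces are what separate a clean statement from a black-box chaining of Nikiforov and Erd\H{o}s--Simonovits.
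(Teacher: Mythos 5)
The paper itself does not prove this lemma: it is imported verbatim from \cite{DKLNTW}, where it is deduced in a few lines from Nikiforov's spectral stability theorem (if $\lambda(G)>(1-\tfrac1r-c)n$ then either $G$ contains a complete $(r+1)$-partite subgraph with $r$ parts of size about $c\log n$ and one huge part, or $G$ is within $O(c^{1/(8r+8)})n^2$ edge-edits of $T_{n,r}$); since $\chi(F)=r+1$ places $F$ inside such a blow-up for large $n$, the $F$-free hypothesis kills the first alternative and the second is the lemma. Your route is genuinely different: regularity lemma, $K_{r+1}$-freeness of the reduced graph $R$, transfer of Nikiforov's inequality $\lambda^2\le 2(1-\tfrac1r)e$, then Erd\H{o}s--Simonovits edge stability. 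The skeleton is viable and the final calibration is right: from $\lambda^2(G)\le 2(1-\tfrac1r)e(G)+o(n^2)$ and $\lambda(G)\ge(1-\tfrac1r-\delta)n$ one gets $e(G)\ge(1-\tfrac1r-2\delta-o(1))n^2/2$, which is exactly what edge stability needs. What your approach buys is self-containedness from classical tools; what the citation route buys is brevity, at the price of a specialized black box.

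The one place where your write-up falls short of a proof is the step you call ``a standard spectral comparison'': this is the actual content of the lemma and it is asserted, not established. Cut-norm control of regular pairs does not by itself bound the change in the Rayleigh quotient, since a matrix of small cut norm can have spectral norm of order $n$. To close this, write $\lambda(G)=\mathbf{x}^TA(G)\mathbf{x}$ for a unit eigenvector $\mathbf{x}$, discard the edges in the exceptional set, irregular pairs and sparse pairs (at most $O(\eta n^2)$ of them, contributing $O(\sqrt{\eta}\,n)$ to the quadratic form), and compare the rest with the $d_{ij}$-weighted blow-up $B$ of $R$: split $\mathbf{x}$ into a flat part with entries at most $K/\sqrt n$ and a spiky part supported on at most $n/K^2$ vertices; a level-set expansion bounds the flat--flat term by $O(K^2\eta n)$ and the spiky terms by $O(n/K)$, so $K=\eta^{-1/3}$ gives an error $O(\eta^{1/3}n)$ (alternatively invoke the known continuity of eigenvalues with respect to the cut norm). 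You also need, and do not state, the weighted form of Nikiforov's inequality for $B$: since the support of $B$ is a blow-up of the $K_{r+1}$-free graph $R$, its clique number is at most $r$, and Motzkin--Straus combined with Cauchy--Schwarz (using that the weights lie in $[0,1]$) gives $\lambda(B)^2\le 2(1-\tfrac1r)w(B)$ with total weight $w(B)\le e(G)+o(n^2)$. With those two pieces supplied your argument is complete; without them the crucial inequality $\lambda^2(G)\le 2(1-\tfrac1r)e(G)+o(n^2)$ for $F$-free $G$ remains an assertion.
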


Given a graph G, the vertex partition $\Pi: V(G)=V_1\cup V_2\cup\cdots\cup V_k$ is said to be an equitable partition if, for each $u\in V_i,|V_j\cap N(u)|=b_{ij}$ is a constant depending only on $i,j$ $(1\leq i,j\leq k)$. The matrix $B_\Pi=(b_{ij})$ is called the quotient matrix of $G$ with respect to $\Pi$.

\begin{lemma}[\cite{CRS}]\label{lem-2-2}
    Let $\Pi\colon V(G) = V_1\cup V_2\ldots\cup V_k$ be an equitable partition of $G$ with quotient matrix $B_{\Pi}$. Then, the largest eigenvalue of $B_{\Pi}$ is just the spectral radius of $G$.
\end{lemma}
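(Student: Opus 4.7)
The plan is to show that eigenvalues of the $k\times k$ quotient matrix $B_\Pi$ lift to eigenvalues of $A(G)$, and then to use the Perron--Frobenius theorem on both $B_\Pi$ and $A(G)$ to identify the largest such lifted eigenvalue with $\lambda(G)$.

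First I would introduce the characteristic matrix $P\in\{0,1\}^{n\times k}$ of the partition $\Pi$, whose $(v,i)$-entry equals $1$ exactly when $v\in V_i$. The defining property of an equitable partition --- that every $v\in V_i$ has exactly $b_{ij}$ neighbours in $V_j$ --- can be rephrased as the single matrix identity $AP=PB_\Pi$, since both sides have $(v,j)$-entry equal to $b_{ij}$ whenever $v\in V_i$. Because the columns of $P$ are indicators of the nonempty disjoint blocks $V_1,\ldots,V_k$, they are linearly independent, so $P$ has full column rank.

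From $AP=PB_\Pi$, any eigenpair $(\mu,y)$ of $B_\Pi$ immediately produces the eigenpair $(\mu,Py)$ of $A(G)$, with $Py\neq 0$ by the full rank of $P$. In particular every eigenvalue of $B_\Pi$ is an eigenvalue of $A(G)$, so the largest eigenvalue $\mu^{\ast}$ of $B_\Pi$ satisfies $\mu^{\ast}\le\lambda(G)$. For the reverse inequality I would invoke Perron--Frobenius on the entrywise nonnegative matrix $B_\Pi$ to obtain a nonnegative eigenvector $y\ge 0$ attached to $\mu^{\ast}$; lifting yields $x:=Py\ge 0$, a nonnegative nonzero eigenvector of $A(G)$ with eigenvalue $\mu^{\ast}$. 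Since $G$ is connected, $A(G)$ is irreducible, and Perron--Frobenius then forces its only eigenvalue admitting a nonnegative eigenvector to be $\lambda(G)$; hence $\mu^{\ast}=\lambda(G)$.

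There is no substantive obstacle here --- the statement is essentially the equitable-partition avatar of a standard linear-algebra principle. The only step that deserves genuine care is the verification of the identity $AP=PB_\Pi$ directly from the combinatorial definition of $b_{ij}$, together with the observation that full column rank of $P$ prevents the lifted vectors from collapsing to zero; once these are in hand, the two applications of the Perron--Frobenius theorem combine cleanly to give the claimed equality.
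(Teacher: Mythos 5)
Your proof is correct and is the standard argument for this fact: the paper itself gives no proof, citing the textbook of Cvetkovi\'c, Rowlinson and Simi\'c, and your route via the identity $AP=PB_{\Pi}$ for the characteristic matrix $P$, eigenvector lifting, and two applications of Perron--Frobenius is essentially the proof found there. The only points worth making explicit are that the largest eigenvalue of $B_{\Pi}$ coincides with its spectral radius because $B_{\Pi}$ is nonnegative (indeed diagonally similar to a symmetric matrix, so its eigenvalues are real), and that the irreducibility of $A(G)$ uses the connectedness of $G$, which is the standing assumption of Section 2 and holds in every application of the lemma in the paper.
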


\begin{lemma}[\cite{Niki1}]\label{lem-2-3}
    Let $A$ and $A'$ be the adjacency matrices of two connected graphs $G$ and $G'$ on the same vertex. Suppose that $N_{G}(u)\subsetneq N_{G'}(u)$ for some vertex $u$. If the Perron vector $\textbf{x}$ of $G$ satisfies $\mathbf{x}^TA'\mathbf{x}\geq \mathbf{x}^TA\mathbf{x}$, then $\lambda(G')>\lambda(G)$.
\end{lemma}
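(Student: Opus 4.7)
The plan is to combine the Rayleigh quotient characterization of $\lambda$ with the strict containment $N_G(u) \subsetneq N_{G'}(u)$. The argument splits naturally into a weak inequality $\lambda(G') \geq \lambda(G)$ obtained directly from Rayleigh, followed by a strengthening to strict inequality via a contradiction that invokes Perron--Frobenius uniqueness on $G'$.

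First, since $\mathbf{x}$ is the Perron vector of $G$ we have $A\mathbf{x} = \lambda(G)\mathbf{x}$, and because $G$ is connected $\mathbf{x}$ has strictly positive entries. Then $\mathbf{x}^T A \mathbf{x} = \lambda(G)\,\mathbf{x}^T \mathbf{x}$. Applying the Rayleigh bound to $A'$ at the vector $\mathbf{x}$ and using the hypothesis $\mathbf{x}^T A' \mathbf{x} \geq \mathbf{x}^T A \mathbf{x}$ gives
\[
\lambda(G') \;\geq\; \frac{\mathbf{x}^T A' \mathbf{x}}{\mathbf{x}^T \mathbf{x}} \;\geq\; \frac{\mathbf{x}^T A \mathbf{x}}{\mathbf{x}^T \mathbf{x}} \;=\; \lambda(G).
\]

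For the strict inequality I would argue by contradiction: suppose $\lambda(G') = \lambda(G)$. Then both inequalities above are equalities, so $\mathbf{x}$ attains the Rayleigh maximum for $A'$ and is therefore an eigenvector of $A'$ with eigenvalue $\lambda(G')$. Since $G'$ is connected, Perron--Frobenius tells us that its leading eigenspace is one-dimensional and spanned by a strictly positive vector; since $\mathbf{x}$ is itself strictly positive, it must lie in this eigenspace, whence $A'\mathbf{x} = \lambda(G)\,\mathbf{x}$. Now compare the $u$-th coordinates of $A'\mathbf{x}$ and $A\mathbf{x}$. Using $N_G(u) \subsetneq N_{G'}(u)$, we get
\[
0 \;=\; (A'\mathbf{x})_u - (A\mathbf{x})_u \;=\; \sum_{v \in N_{G'}(u) \setminus N_G(u)} x_v,
\]
which is strictly positive because $\mathbf{x} > 0$ entry-wise and the index set is non-empty by the strict containment hypothesis. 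This contradiction forces $\lambda(G') > \lambda(G)$.

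The only conceptually delicate step is the Perron--Frobenius invocation: one must use both that $G'$ is connected (so that its Perron eigenspace is one-dimensional) and that $\mathbf{x}$ is positive (so it actually lies in that eigenspace, rather than possibly being an eigenvector for the same eigenvalue arising from a different invariant subspace). Once this uniqueness is secured, the rest is a one-line coordinate-wise computation at the single vertex $u$ where the neighbourhoods differ, and no additional structural input about $G$ or $G'$ is needed.
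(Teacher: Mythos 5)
Your proof is correct. Note that the paper itself gives no proof of this lemma --- it is quoted from Nikiforov's paper \cite{Niki1} --- so there is nothing internal to compare against; your argument (Rayleigh quotient for the weak inequality, then equality forcing $\mathbf{x}$ to be a top eigenvector of $A'$ and a coordinate comparison at $u$ using $x_v>0$ on the non-empty set $N_{G'}(u)\setminus N_G(u)$) is exactly the standard one. One small remark: the step you flag as ``conceptually delicate'' is actually dispensable. Once $\mathbf{x}$ attains the Rayleigh maximum of the symmetric matrix $A'$, it already lies in the eigenspace of $\lambda(G')$, so $A'\mathbf{x}=\lambda(G')\mathbf{x}$ follows without any appeal to Perron--Frobenius uniqueness for $G'$; in particular connectivity of $G'$ is never used, only connectivity of $G$ (to guarantee $\mathbf{x}>0$ entrywise). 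Your contradiction at the coordinate $u$ then goes through verbatim.
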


\begin{lemma}\label{lem-2-4}
Let $C = x_1x_2\cdots x_{2k+1}x_1$ be an odd cyclic sequence with $x_i \in \{a, b, c, d\}$ for all $1 \le i \le 2k+1$. Then $C$ must contain at least one of the subsequences in the set

$$
\mathcal{F} = \{aa, bb, cc, dd, ad, da, aba, dcd, bdc, cab, cdb, bac\}.
$$

\end{lemma}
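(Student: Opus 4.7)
The plan is to argue by contradiction: assume $C = x_1 x_2 \cdots x_{2k+1}$ avoids every pattern in $\mathcal{F}$ and deduce that the length of $C$ must be even. The argument rests on a structural ``palindromic'' observation extracted from $\mathcal{F}$. The excluded pairs $aa, bb, cc, dd, ad, da$ imply that consecutive letters of $C$ are distinct and that the two neighbours of any occurrence of $a$ or $d$ must lie in $\{b, c\}$. Among the four triples with middle letter $a$ and outer letters in $\{b, c\}$, namely $bab, bac, cab, cac$, exactly $bac$ and $cab$ are excluded by $\mathcal{F}$; hence every triple with middle letter $a$ is either $bab$ or $cac$, i.e.\ the two neighbours of any $a$ are equal. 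The symmetric argument using the forbidden triples $bdc$ and $cdb$ shows that every triple with middle letter $d$ is $bdb$ or $cdc$.

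Given this structure, I would decompose $C$ as follows. Let $s_1, s_2, \ldots, s_m$ denote the cyclic subsequence of entries of $C$ that lie in $\{b, c\}$, listed in the order they appear. The entries of $C$ strictly between $s_i$ and $s_{i+1}$ (indices modulo $m$) all lie in $\{a, d\}$, and because no two of these can be adjacent, each such ``gap'' has length $0$ or $1$. A gap of length $0$ places $s_i$ immediately before $s_{i+1}$, and the prohibition of $bb, cc$ then forces $s_i \neq s_{i+1}$. A gap of length $1$ containing a letter $y \in \{a, d\}$ gives the triple $s_i\,y\,s_{i+1}$, which by the palindromic observation forces $s_i = s_{i+1}$. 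The degenerate cases $m \leq 1$ are ruled out directly for $n \geq 3$: if $m = 0$ every consecutive pair of $C$ lies in $\{aa, ad, da, dd\} \subseteq \mathcal{F}$, and if $m = 1$ the unique $\{b,c\}$-entry has a long $\{a,d\}$-arc on its other side in $C$, producing an adjacent pair from $\{a, d\}$.

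To finish, let $p$ denote the number of length-$0$ gaps, so the number of length-$1$ gaps is $m - p$ and
\[
n \;=\; m + (m - p) \;=\; 2m - p.
\]
By the dichotomy above, $p$ equals the number of indices $i$ with $s_i \neq s_{i+1}$ in the cyclic binary sequence $s_1, \ldots, s_m$ over $\{b, c\}$, and any cyclic binary sequence has an even number of sign changes. Hence $n$ is even, contradicting $n = 2k + 1$.

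The main obstacle is isolating the right structural invariant, namely the palindromic rule around each occurrence of $a$ and $d$, rather than drowning in a direct case analysis of the ten allowed transitions among letter pairs. Once that rule is identified, the remainder is a one-line parity count. It is worth noting that the triples $aba$ and $dcd$ in $\mathcal{F}$ are not used by this argument; they appear to be extra constraints needed only in the downstream application of the lemma.
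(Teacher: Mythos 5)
Your proof is correct, and it takes a genuinely different route from the paper's. The paper proceeds by induction on $k$: after normalising $x_1=a$, the forbidden pairs force $x_2,x_{2k+1}\in\{b,c\}$, the forbidden triples $bac$, $cab$ force $x_2=x_{2k+1}$, and the two ends are merged into one letter to produce an odd cyclic sequence of length $2k-1$; the inductive hypothesis then supplies a forbidden pattern in the contracted cycle, which a short case analysis lifts back to $C$. You instead argue globally: the pairs $aa,bb,cc,dd,ad,da$ confine $a$ and $d$ to isolated positions whose two neighbours lie in $\{b,c\}$, the triples $bac,cab,bdc,cdb$ force those two neighbours to coincide, and the count $n=2m-p$ (with $m$ the number of $\{b,c\}$-entries and $p$ the number of length-zero gaps, which by your dichotomy equals the number of sign changes of the cyclic binary word $s_1\cdots s_m$ and is therefore even) shows that $n$ is even, contradicting $n=2k+1$; your treatment of the degenerate cases $m\le 1$ is also correct for $n\ge 3$. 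The paper's induction is mechanical but needs a base case and a lifting case analysis; your parity count is shorter, induction-free, and in fact establishes the slightly stronger statement in which $aba$ and $dcd$ are deleted from $\mathcal{F}$ --- which still suffices for Corollary \ref{cor-2-5}, since every pattern of the smaller set already yields the monochromatic $P_4$ or the bicoloured $C_4$ there.
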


\begin{proof}
If $x_i \notin \{a, d\}$ for all $1 \le i \le 2k+1$, then $C$ must contain either the subsequence $bb$ or $cc$. Hence, we may assume without loss of generality that $x_1 = a$.

We proceed by induction on $k$. The base case $k = 1$ can be easily verified by direct inspection.

Now assume the lemma holds for all values less than $k \ge 2$, and suppose for contradiction that $C$ contains none of the subsequences in $\mathcal{F}$. Since $C$ contains none of $\{aa, ad, da\}$, it follows that $x_2, x_{2k+1} \in \{b, c\}$. Furthermore, as $C$ avoids $\{bac, cab\}$, we must have either $x_2 = x_{2k+1} = b$ or $x_2 = x_{2k+1} = c$. Without loss of generality, assume $x_2 = x_{2k+1} = b$ (the other case is symmetric).

Define a new odd cyclic sequence $C' = y y_3 y_4 \cdots y_{2k} y$, where $y = x_2 = x_{2k+1} = b$, and $y_i = x_i$ for all $3 \le i \le 2k$. Since $C$ contains none of the subsequences in $\mathcal{F}$, it follows that the linear sequence $y_3 y_4 \cdots y_{2k}$ also avoids $\mathcal{F}$. Thus, by the inductive hypothesis, $C'$ must contain a subsequence in $\mathcal{F}$. That subsequence must include $y = b$, and hence one of the following cases occurs:
\begin{itemize}
\item[(i)] $y_{2k}y = bb$ or $y y_3 = bb$,
\item[(ii)] $y_{2k} y y_3 = aba$,
\item[(iii)] $y y_3 y_4 \in \{bdc, bac\}$,
\item[(iv)] $y_{2k-1} y_{2k} y \in \{cdb, cab\}$.
\end{itemize}
If (i) occurs, then $x_{2k}x_{2k+1}=bb$ or $x_2x_3=bb$; if (ii) occurs, then $x_{2k}x_{2k+1}x_1=aba$; if (iii) occurs, then $x_2x_3x_4\in\{bdc,bac\}$; if (iv) occurs, then $y_{2k-1}y_{2k}y_{2k+1}\in\{cdb,cab\}$. In all cases, a contradiction arises to the assumption that $C$ contains none of the subsequences in $\mathcal{F}$.

This completes the proof.
\end{proof}
\begin{cor}\label{cor-2-5}
   If we color each vertex of $C_{2k+1}^{\square}$ with red or blue, then there exists either a monochromatic path $P_4=v_1v_2u_1u_2 $,  or a cycle $C_4 = uvwz$ where vertices $u $ and $ v $ are red while $ w,z $ are blue.  
\end{cor}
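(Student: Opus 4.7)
The plan is to translate the vertex colouring into a cyclic symbol sequence of length $2k+1$ over $\{a,b,c,d\}$ and then invoke Lemma~\ref{lem-2-4}. Write $C_{2k+1}^{\square}$ as two copies of $C_{2k+1}$---a ``top'' cycle $v_1^1 v_2^1 \cdots v_{2k+1}^1$ and a ``bottom'' cycle $v_1^2 v_2^2 \cdots v_{2k+1}^2$---joined by rungs $v_i^1 v_i^2$. I would assign to the $i$-th rung the symbol $a$ if both endpoints are red, $b$ if only the top is red, $c$ if only the bottom is red, and $d$ if both are blue. Applying Lemma~\ref{lem-2-4} to the resulting cyclic sequence $x_1 x_2 \cdots x_{2k+1}$ guarantees the appearance of one of the twelve subsequences in $\mathcal{F}$.

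The remainder is a routine case analysis matching each forbidden subsequence to the required substructure. Two consecutive rungs labelled $aa$ or $dd$ give a monochromatic $C_4$, and hence a monochromatic $P_4$. Two consecutive rungs labelled $bb$ or $cc$ produce a bichromatic $C_4$ of the required type, with the two red (respectively blue) vertices lying on the top (respectively bottom) cycle and joined by a single cycle edge. The patterns $ad$ and $da$ also yield a bichromatic $C_4$, but now with the two red vertices forming one rung and the two blue vertices forming the adjacent rung.

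For the three-letter patterns, each case produces four vertices of one colour forming a $P_4$. The patterns $aba$ and $dcd$ supply three consecutive monochromatic vertices on one cycle, which I extend by a rung at an endpoint. Each of $bdc$, $cab$, $cdb$, $bac$ forces four monochromatic vertices arranged along a ``cycle edge--rung--cycle edge'' path: for instance, under $bdc$ the four blue vertices $v_i^2, v_{i+1}^2, v_{i+1}^1, v_{i+2}^1$ form a $P_4$ using the bottom cycle edge, the middle rung, and a top cycle edge; the remaining three patterns are handled symmetrically.

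I do not anticipate a genuine obstacle here: the combinatorial core has been isolated in Lemma~\ref{lem-2-4}, and what remains is bookkeeping. The only care needed is to fix a consistent orientation (which cycle is ``top'' and which is ``bottom'') so that the $b/c$ and $ad/da$ asymmetries are handled uniformly, and to present the twelve cases compactly rather than verbatim.
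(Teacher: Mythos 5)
Your proposal is correct and follows essentially the same route as the paper: encode each rung of $C_{2k+1}^{\square}$ by a symbol in $\{a,b,c,d\}$ according to the colours of its two endpoints, apply Lemma~\ref{lem-2-4} to the resulting odd cyclic sequence, and convert each forbidden pattern into either a monochromatic $P_4$ or the required red--blue $C_4$. Your case analysis ($aa,dd,bb,cc,ad,da$ giving the $C_4$ or monochromatic $C_4$, and the three-letter patterns giving a cycle-edge--rung--cycle-edge $P_4$) matches, and is in fact spelled out more carefully than, the paper's verification.
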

\begin{proof}
Let $V(C_{2k+1}^{\square})=U\cup V$ where $U=\{u_1,u_2,\ldots,u_{2k+1}\}$ and $V=\{v_1,v_2,\ldots,v_{2k+1}\}$ such that $U$ and $V$ induce two copies of $C_{2k+1}$ and $\{u_1 v_1,u_2v_2,\ldots,u_{2k+1}v_{2k+1}\}$ is a matching. Define an odd-length cyclic sequence $C = x_1\cdots x_{2k+1}$ with $x_i \in \{a,b,c,d\}$ encoding the color pairs:  
\[x_i=\left\{\begin{array}{ll}
 a & \text{if both } u_i \text{ and } v_i \text{are colored red}, \\[1mm]
 b & \text{if } u_i \text{ is colored red and } v_i \text{ is colored blue}, \\[1mm]
 c & \text{if } u_i \text{ is colored blue and } v_i \text{ is colored red}, \\[1mm]
d & \text{if both} u_i\text{ and } v_i \text{ are colored blue}.
\end{array}\right.\]
By Lemma \ref{lem-2-4}, $C$ contains a subsequence in $\mathcal{F}$. It is easy to verify that, if $C$ contains one of $\{aa,dd,aba,dcd,bdc,cab,cdb,bac\}$, then $C_{2k+1}^{\square}$ has a monochromatic $P_4$; if $C$ contains one of $\{aa,dd,aba,dcd,bdc,cab,cdb,bac\}$, then $C_{2k+1}^{\square}$ has a desired $C_4$.
\end{proof}

\section{Proof of Theorem \ref{thm-1-2}}
 For brevity, we always assume $ n $ to be sufficiently large throughout the paper. Let $ G $ be the graph attaining the maximum spectral radius among all $ n $-vertex graphs containing no $ C_{2k+1}^{\square} $, and let $ \bm{x} = (x_1, x_2, \ldots, x_n)^T $ be the Perron vector of $ G $. We will establish a series of lemmas to characterize the structure of $ G $.  
\begin{lemma}\label{lem-3-1}
    $\lambda(G)\geq \lambda(K_{1}\vee T_{n-1,2})\geq n/2.$
\end{lemma}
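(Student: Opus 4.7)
The statement splits into two independent bounds, and I would handle them separately.

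For the first inequality $\lambda(G)\ge\lambda(K_1\vee T_{n-1,2})$, since $G$ is defined as a spectral-extremal $C_{2k+1}^{\square}$-free graph, it suffices to verify that $K_1\vee T_{n-1,2}$ is itself $C_{2k+1}^{\square}$-free. The key observation is that $C_{2k+1}^{\square}=C_{2k+1}\square K_2$ contains two vertex-disjoint copies of $C_{2k+1}$: writing $V(K_2)=\{a,b\}$, the two ``layers'' $V(C_{2k+1})\times\{a\}$ and $V(C_{2k+1})\times\{b\}$ each induce a $C_{2k+1}$ by the definition of the Cartesian product. Consequently, any graph containing $C_{2k+1}^{\square}$ must contain two vertex-disjoint odd cycles. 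On the other hand, deleting the apex of $K_1\vee T_{n-1,2}$ leaves the bipartite graph $T_{n-1,2}$, which has no odd cycle at all; hence every odd cycle of $K_1\vee T_{n-1,2}$ is forced to pass through the single apex vertex, ruling out two vertex-disjoint odd cycles. Thus $K_1\vee T_{n-1,2}$ cannot contain $C_{2k+1}^{\square}$ as a subgraph, and the extremality of $G$ yields the first inequality.

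For the bound $\lambda(K_1\vee T_{n-1,2})\ge n/2$, I would simply plug the all-ones vector into the Rayleigh quotient, giving
\[
\lambda(K_1\vee T_{n-1,2})\ \ge\ \frac{2\,e(K_1\vee T_{n-1,2})}{n}\ =\ \frac{2(n-1)+2\floor{(n-1)/2}\ceil{(n-1)/2}}{n},
\]
counting the $n-1$ apex-edges together with the edges of the balanced complete bipartite graph $T_{n-1,2}$. A brief case-check by the parity of $n$ shows that the right-hand side equals $n/2+1-O(1/n)$ in both cases, which exceeds $n/2$ for all sufficiently large $n$. (Alternatively, one could invoke the equitable partition $\{\text{apex}\}\cup V_1\cup V_2$ and estimate the largest eigenvalue of the resulting $3\times 3$ quotient matrix via \Cref{lem-2-2}, but the Rayleigh-quotient route is shorter.)

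Neither step presents any real obstacle: the only mildly subtle point is the structural observation that every odd cycle of $K_1\vee T_{n-1,2}$ is forced through the single apex, which is precisely why this graph is a natural $C_{2k+1}^{\square}$-free candidate. I expect this ``one dominating vertex accounts for all odd cycles'' mechanism to be the organising principle for the deeper structural lemmas that follow, where the goal will be to show that the extremal $G$ is forced into the same apex-plus-bipartite shape.
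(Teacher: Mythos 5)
Your proof is correct and takes essentially the same route as the paper: the first inequality follows from the extremality of $G$ once $K_1\vee T_{n-1,2}$ is known to be $C_{2k+1}^{\square}$-free (the paper asserts this as ``clearly,'' whereas you justify it via the two vertex-disjoint odd cycles in $C_{2k+1}^{\square}$ versus the single apex carrying all odd cycles), and the second inequality comes from the all-ones Rayleigh quotient together with an edge count of $K_1\vee T_{n-1,2}$, exactly as in the paper. No gaps.
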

\begin{proof}
Clearly, $K_{1}\vee T_{n-1,2}$ is $C_{2k+1}^{\square}$-free. Since $e(T_{n-1,2})=\lfloor\frac{(n-1)^2}{4}\rfloor\geq \frac{n^2-2n-2}{4}$, we have 
$$
e(K_{1}\vee T_{n-1,2})=e(T_{n-1,2})+(n-1)\geq\frac{n^2+2n-6}{4}. 
$$
Using the Rayleigh quotient gives
$$
\lambda(G) \ge \lambda(K_{1}\vee T_{n-1,2})\ge \frac{\mathbf{1}^TA(K_{1}\vee T_{n-1,2})\mathbf{1}}{\mathbf{1}^T\mathbf{1}}=\frac{2e(K_{1}\vee T_{n-1,2})}{n}\geq \frac{n}{2}
$$  
for sufficiently large $n$, as desired.
\end{proof}

Next we show that $G$ contains a large maximum cut.
\begin{lemma}\label{lem-3-2}
     For every $\varepsilon>0$ and $n$ sufficiently large,  $e(G)\geq e(T_{n,2})-\varepsilon n^2$. Furthermore, $G$ admits a partition $V(G)=V_1\cup V_2$ such that $e(V_1, V_2)$ attains the maximum, $e(V_1)+e(V_2) \leq \varepsilon n^2$, and $||V_i|-n/2|\leq \frac{3\sqrt{\varepsilon}n}{2}$ for each $i\in[2]$.
\end{lemma}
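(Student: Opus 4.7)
The plan is to invoke the stability lemma (Lemma 2.1). First observe that $\chi(C_{2k+1}^{\square})=\max\{\chi(C_{2k+1}),\chi(K_2)\}=3$, since the chromatic number of a Cartesian product equals the maximum chromatic number of its factors. Hence Lemma 2.1 applies to $F=C_{2k+1}^{\square}$ with $r=2$.

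Given $\varepsilon>0$, let $\delta>0$ and $n_0$ be the constants produced by Lemma 2.1 for this $\varepsilon$. By Lemma 3.1 we have $\lambda(G)\ge n/2\ge(1-\tfrac12-\delta)n$, so the spectral hypothesis holds for every $n\ge n_0$. Consequently $G$ is obtained from $T_{n,2}$ by adding and deleting at most $\varepsilon n^2$ edges in total. Let $W_1,W_2$ be the two parts of this copy of $T_{n,2}$ regarded as a bipartition of $V(G)$. Since $T_{n,2}$ has no edges inside its parts, every edge of $G$ lying inside $W_1$ or $W_2$ is a newly added edge, so $e_G(W_1)+e_G(W_2)\le \varepsilon n^2$. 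Therefore $e_G(W_1,W_2)\ge e(T_{n,2})-\varepsilon n^2$, and in particular $e(G)\ge e(T_{n,2})-\varepsilon n^2$, which proves the first assertion.

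Now let $V(G)=V_1\cup V_2$ be a partition realising a maximum cut. Since the bipartition $(W_1,W_2)$ is just one of the competitors,
\[
e(V_1,V_2)\ge e_G(W_1,W_2)\ge e(G)-\varepsilon n^2,
\]
and consequently $e(V_1)+e(V_2)=e(G)-e(V_1,V_2)\le \varepsilon n^2$. For the size constraint, set $s=\bigl||V_1|-n/2\bigr|$, so that $\bigl||V_2|-n/2\bigr|=s$ as well and $|V_1|\cdot|V_2|=n^2/4-s^2$. Combining $e(V_1,V_2)\le |V_1|\cdot|V_2|$ with the lower bound $e(V_1,V_2)\ge e(T_{n,2})-2\varepsilon n^2\ge n^2/4-\tfrac14-2\varepsilon n^2$ yields $s^2\le 2\varepsilon n^2+\tfrac14$; for $n$ sufficiently large this simplifies to $s\le \tfrac{3}{2}\sqrt{\varepsilon}\,n$.

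The argument is essentially mechanical once Lemma 2.1 is available, so no step presents a serious obstacle. The only conceptual ingredient is identifying the chromatic number of the odd prism as $3$ so that we may take $r=2$ in the stability lemma, and the only technical observation is that any maximum cut of $G$ has at least as many crossing edges as the Turán-like bipartition $(W_1,W_2)$ provided by that lemma, which allows us to transfer the edge-deficit bound from $(W_1,W_2)$ to $(V_1,V_2)$.
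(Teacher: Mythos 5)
Your proposal is correct and follows essentially the same route as the paper: apply the stability result (Lemma \ref{lem-2-1}) using the spectral bound of Lemma \ref{lem-3-1}, pass from the near-bipartition given by $T_{n,2}$ to a maximum cut $V_1\cup V_2$ (whose internal edge count can only be smaller), and then bound $\bigl||V_i|-n/2\bigr|$ by comparing $|V_1||V_2|$ with the edge lower bound. The only addition is your explicit verification that $\chi(C_{2k+1}^{\square})=3$ (so $r=2$), a detail the paper leaves implicit.
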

\begin{proof}
    Since $G$ is $C_{2k+1}^{\square}$-free, by Lemmas \ref{lem-2-1} and \ref{lem-3-1}, for sufficiently large $n$, there exists a partition of $V(G)=U_1\cup U_2$ such that $e(G)\geq e(T_{n,2})-\varepsilon n^2$, $e(U_1)+e(U_2)\leq \varepsilon n^2$, and $\lfloor\frac{n}{2}\rfloor\leq |U_i|\leq \lceil\frac{n}{2}\rceil$ for $i\in \{1,2\}$. Therefore such a partition $V(G)=V_1\cup V_2$ such that the number of crossing edges of $G$ attains the maximum, and thus
     \[e(V_1)+e(V_2)\leq e(U_1)+e(U_2)\leq \varepsilon n^2.\]
     Let $a=||V_1|-\frac{n}{2}|=||V_2|-\frac{n}{2}|$. Then we have 
    $$e(G)=e(V_1,V_2)+e(V_1)+e(V_2)\leq |V_1||V_2|+\varepsilon n^2=\frac{n^2}{4}-a^2+\varepsilon n^2.$$
    Recall that 
    $$e(G)\geq e(T_{n,2})-\varepsilon n^2\geq \frac{n^2-1}{4}-\varepsilon n^2.$$
    It leads to
    $$a\leq \sqrt{2\varepsilon n^2+\frac{1}{4}}\leq \sqrt{\frac{9\varepsilon n^2}{4}}=\frac{3\sqrt{\varepsilon}n}{2},$$
    as desired.
\end{proof}

In what follows, we will always assume $V(G)=V_1\cup V_2$ is the partition characterized in Lemma \ref{lem-3-2}. Denote by $L=\{v\in V(G)\mid d(v)\leq (\frac{1}{2}-8\sqrt{\varepsilon})n\}$. Next we will show that most vertices have degree close to $\frac{n}{2}$ by showing $|L|$ is `very small'. 
\begin{lemma}\label{lem-3-3}
    $|L|\leq \sqrt{\varepsilon}n$
\end{lemma}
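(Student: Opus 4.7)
The plan is a direct double-counting argument on non-edges across the partition $V(G) = V_1 \cup V_2$ supplied by Lemma \ref{lem-3-2}. I will assume for contradiction that $|L| > \sqrt{\varepsilon}n$.

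First, I will bound the number of non-edges between $V_1$ and $V_2$ from above. Lemma \ref{lem-3-2} gives $e(V_1)+e(V_2) \leq \varepsilon n^2$ and $e(G) \geq e(T_{n,2}) - \varepsilon n^2 \geq (n^2-1)/4 - \varepsilon n^2$, whence
\begin{equation*}
e(V_1,V_2) = e(G) - e(V_1) - e(V_2) \geq \frac{n^2-1}{4} - 2\varepsilon n^2.
\end{equation*}
Combined with $|V_1||V_2| \leq n^2/4$ (AM-GM on $|V_1|+|V_2|=n$), this caps the number of non-edges between $V_1$ and $V_2$ by $|V_1||V_2| - e(V_1,V_2) \leq 3\varepsilon n^2$ for $n$ sufficiently large.

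Next, I will show that each vertex of $L$ contributes many such non-edges. For $v \in L \cap V_i$, the defining bound $d_{V_{3-i}}(v) \leq d(v) \leq (\tfrac{1}{2}-8\sqrt{\varepsilon})n$ together with $|V_{3-i}| \geq \tfrac{n}{2} - \tfrac{3\sqrt{\varepsilon}n}{2}$ from Lemma \ref{lem-3-2} yields at least
\begin{equation*}
|V_{3-i}| - d_{V_{3-i}}(v) \geq \frac{n}{2} - \frac{3\sqrt{\varepsilon}n}{2} - \left(\frac{1}{2}-8\sqrt{\varepsilon}\right)n = \frac{13\sqrt{\varepsilon}n}{2}
\end{equation*}
non-neighbours of $v$ in $V_{3-i}$. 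Summing over $v \in L$ and observing that each non-edge across the cut is counted at most twice, I obtain
\begin{equation*}
|L| \cdot \frac{13\sqrt{\varepsilon}n}{2} \leq 2 \cdot 3\varepsilon n^2 = 6\varepsilon n^2,
\end{equation*}
so $|L| \leq 12\sqrt{\varepsilon}n/13 < \sqrt{\varepsilon}n$, contradicting the assumption.

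The argument is essentially a pigeonhole once Lemma \ref{lem-3-2} is in hand, and I do not anticipate any genuine obstacle. The constants $8\sqrt{\varepsilon}$ in the definition of $L$ and $\tfrac{3\sqrt{\varepsilon}}{2}$ in the partition-size error are calibrated so that each low-degree vertex leaves $\Theta(\sqrt{\varepsilon}n)$ non-neighbours across the cut, exactly matching the global non-edge budget $O(\varepsilon n^2)$ to produce the threshold $|L| = O(\sqrt{\varepsilon}n)$. Note that the spectral lower bound $\lambda(G) \geq n/2$ from Lemma \ref{lem-3-1} is not needed here; it will be used in subsequent lemmas refining the structure of $G$.
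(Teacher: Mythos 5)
Your proof is correct, but it takes a genuinely different route from the paper. The paper argues by contradiction via the exact Tur\'an number: it selects a subset $L'\subseteq L$ of size $\lfloor\sqrt{\varepsilon}n\rfloor$, observes that deleting $L'$ removes at most $|L'|\bigl(\tfrac12-8\sqrt{\varepsilon}\bigr)n$ edges, so $e(G-L')$ would still exceed $\ex(n',C_{2k+1}^{\square})$ as given by Theorem \ref{thm-1-1}, forcing a copy of $C_{2k+1}^{\square}$ in $G-L'\subseteq G$ and contradicting $C_{2k+1}^{\square}$-freeness. You instead never invoke Theorem \ref{thm-1-1} or the forbidden subgraph at all: you double-count the non-edges across the max-cut partition, bounding them globally by $|V_1||V_2|-e(V_1,V_2)\leq 3\varepsilon n^2$ via Lemma \ref{lem-3-2}, and locally noting that each $v\in L$ misses at least $\tfrac{13\sqrt{\varepsilon}n}{2}$ vertices of the opposite side (the arithmetic $8\sqrt{\varepsilon}n-\tfrac{3\sqrt{\varepsilon}n}{2}=\tfrac{13\sqrt{\varepsilon}n}{2}$ and the factor $2$ for non-edges with both endpoints in $L$ are handled correctly), which yields directly $|L|\leq\tfrac{12}{13}\sqrt{\varepsilon}n<\sqrt{\varepsilon}n$ without any contradiction setup. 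What each approach buys: yours is more elementary and self-contained, relying only on the stability output of Lemma \ref{lem-3-2}, and it even gives a marginally stronger constant; the paper's route is the standard template in spectral extremal arguments of playing the edge count against the exact extremal number, which reuses Theorem \ref{thm-1-1} (already needed elsewhere) and would also adapt if one wanted a finer degree threshold in the definition of $L$. Both are valid proofs of the stated bound.
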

\begin{proof}
    Suppose to the contrary that $|L|>\sqrt{\varepsilon}n$. Then there exists a subset $L'\subseteq L$ with $|L'|=\lfloor\sqrt{\varepsilon}n\rfloor$. Combining this with the fact $e(G)\geq e(T_{n,2}) -\varepsilon n^2$, we get
   \begin{align*}
       e(G-L')&\geq e(G)-\sum_{v\in L'}d(v)\\
       &\geq \frac{n^2-1}{4}-\sqrt{\varepsilon}n(\frac{1}{2}-8\sqrt{\varepsilon})n\\
       &>\frac{(1-2\sqrt{\varepsilon}+\varepsilon)n^2+2(1-\sqrt{\varepsilon})n+1}{4}\\
       &=\frac{(n-\sqrt{\varepsilon}n)^2+2(n-\sqrt{\varepsilon} n)+1}{4}\\
       &>\ex(n',C_{2k+1}^{\square}).
   \end{align*}
   where $n'=n-|L'|=n-\lfloor\varepsilon n\rfloor$ and the last inequality follows from Theorem \ref{thm-1-1}. Therefore, $G-L'$ contains a $C_{2k+1}^{\square}$ as a subgraph. This contradicts the fact that $G$ is $C_{2k+1}^{\square}$-free.
\end{proof}

For $i\in \{1,2\}$, let $W_i=\{v\in V_i\mid d_{V_i}(v)\geq 2\sqrt{\varepsilon}n\}$ and $W=W_1\cup W_2$. We derive an upper bound for $|W|$.
\begin{lemma}\label{lem-3-4}
    $|W|\leq \sqrt{\varepsilon}n$
\end{lemma}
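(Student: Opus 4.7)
The plan is to apply a straightforward double-counting argument that combines the definition of $W_i$ with the edge bound $e(V_1) + e(V_2) \leq \varepsilon n^2$ from Lemma \ref{lem-3-2}. The intuition is that if too many vertices had many neighbours inside their own part, then there would be too many edges within $V_1 \cup V_2$ in total, contradicting the ``almost-bipartite'' structure already established.

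Concretely, I would first record, for each $i \in \{1,2\}$, the handshake identity
\[
\sum_{v \in V_i} d_{V_i}(v) \;=\; 2e(V_i).
\]
Restricting the sum to $W_i \subseteq V_i$ and applying the defining inequality $d_{V_i}(v) \geq 2\sqrt{\varepsilon}\,n$ for $v \in W_i$ gives
\[
2\sqrt{\varepsilon}\,n \cdot |W_i| \;\leq\; \sum_{v \in W_i} d_{V_i}(v) \;\leq\; 2e(V_i),
\]
so $|W_i| \leq e(V_i)/(\sqrt{\varepsilon}\,n)$. Summing over $i \in \{1,2\}$ and invoking Lemma \ref{lem-3-2} yields
\[
|W| \;=\; |W_1| + |W_2| \;\leq\; \frac{e(V_1) + e(V_2)}{\sqrt{\varepsilon}\,n} \;\leq\; \frac{\varepsilon n^2}{\sqrt{\varepsilon}\,n} \;=\; \sqrt{\varepsilon}\,n,
\]
which is exactly the claimed bound.

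There is essentially no obstacle: the argument is a one-line averaging estimate, and it does not require appealing to $C_{2k+1}^{\square}$-freeness, the spectral radius, or the colouring-based Corollary \ref{cor-2-5}. The only ``design choice'' is that the threshold $2\sqrt{\varepsilon}\,n$ used in defining $W_i$ is calibrated so that a bound of order $\varepsilon n^2$ on $e(V_1)+e(V_2)$ translates into a bound of order $\sqrt{\varepsilon}\,n$ on $|W|$, matching the error scale $\sqrt{\varepsilon}\,n$ already appearing for $|L|$ and for the imbalance of $|V_i|$. Hence Lemma \ref{lem-3-4} is really a bookkeeping consequence of Lemma \ref{lem-3-2} and serves to set up later structural arguments in which we will be able to treat $V(G) \setminus (L \cup W)$ as a set of vertices with nearly half of the graph as neighbours and with almost all neighbours lying in the opposite part.
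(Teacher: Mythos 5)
Your proof is correct and follows essentially the same double-counting argument as the paper: bound $\sum_{v\in W_i} d_{V_i}(v)$ below by the threshold defining $W_i$ and above by $e(V_1)+e(V_2)\le\varepsilon n^2$. In fact your version is slightly cleaner, since the paper's displayed identity omits the factor $2$ in $\sum_{v\in V_i} d_{V_i}(v)=2e(V_i)$ (a harmless slip that cancels against using $\sqrt{\varepsilon}n$ rather than $2\sqrt{\varepsilon}n$ on the other side), whereas you track both factors explicitly.
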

\begin{proof}
    By the definition of $W$, we have 
    $$
    e(V_1)+e(V_2)=\sum_{v\in V_1}d_{V_1}(v)+\sum_{v\in V_2}d_{V_2}(v)\geq \sum_{v\in W_1}d_{V_1}(v)+\sum_{v\in W_2}d_{V_2}(v) \geq |W|\sqrt{\varepsilon}n.
    $$
 Since $e(V_1)+e(V_2)\le\varepsilon n^2$, we have 
  $$
    \varepsilon n^2\geq e(V_1)+e(V_2)\geq |W|\sqrt{\varepsilon}n.
    $$
    It leads to $|W|\leq \sqrt{\varepsilon}n$.
\end{proof}

Let $v^*$ be the vertex satisfying $x_{v^*} = \max \{ x_v\mid v \in V(G) \setminus W \}$, and let $u^*$ be the vertex satisfying $x_{u^*} = \max \{ x_v \mid v \in V(G) \}=1$. Next we will prove that $L$ is empty.
\begin{lemma}\label{lem-3-5}
    $L=\emptyset$.
\end{lemma}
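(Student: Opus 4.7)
The plan is to argue by contradiction. Assume there is some $v\in L$; I will perform a Kelmans-type switch that turns $v$ into a non-adjacent twin of $u^*$, verify that this strictly increases the spectral radius, and check that the resulting graph remains $C_{2k+1}^{\square}$-free, contradicting the spectral extremality of $G$.

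First I would record two Perron estimates. From the eigenvalue equation $\lambda x_v=\sum_{u\sim v}x_u\le d(v)\le(\tfrac12-8\sqrt{\varepsilon})n$ combined with $\lambda\ge n/2$ (Lemma \ref{lem-3-1}), I get $x_v\le 1-16\sqrt{\varepsilon}$, so $x_v$ sits a definite distance below $x_{u^*}=1$. Applying the eigenvalue equation at $u^*$ and using Lemma \ref{lem-3-4} yields
\[
\lambda=\sum_{u\sim u^*}x_u\le |W|+n\,x_{v^*}\le\sqrt{\varepsilon}\,n+n\,x_{v^*},
\]
hence $x_{v^*}\ge \lambda/n-\sqrt{\varepsilon}\ge 1/2-\sqrt{\varepsilon}$. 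In particular $u^*\notin L$, since $d(u^*)\ge\lambda\ge n/2$.

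Next, I would form $G'$ on $V(G)$ by replacing the neighborhood of $v$ with $N_G(u^*)\setminus\{v\}$ and leaving all other adjacencies unchanged, so that $v$ and $u^*$ become non-adjacent twins in $G'$. Writing $\delta=1$ if $vu^*\in E(G)$ and $\delta=0$ otherwise, the Perron vector $\mathbf x$ of $G$ satisfies
\[
\mathbf x^TA(G')\mathbf x-\mathbf x^TA(G)\mathbf x=2x_v\bigl(\lambda(1-x_v)-\delta\,x_v\bigr)\ge 2x_v\bigl(8\sqrt{\varepsilon}\,n-1\bigr),
\]
which is strictly positive for sufficiently large $n$. By the Rayleigh quotient $\lambda(G')>\lambda(G)$, so to reach a contradiction with the maximality of $\lambda(G)$ it suffices to prove $G'$ is $C_{2k+1}^{\square}$-free. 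Suppose $H\cong C_{2k+1}^{\square}$ is contained in $G'$. Since $G$ is $C_{2k+1}^{\square}$-free and only edges at $v$ were modified, we must have $v\in V(H)$. If $u^*\notin V(H)$, then $N_H(v)\subseteq N_{G'}(v)=N_G(u^*)\setminus\{v\}$, so substituting $u^*$ for $v$ in $H$ produces a copy of $C_{2k+1}^{\square}$ in $G$, a contradiction. If instead $u^*\in V(H)$, then $v$ and $u^*$ are non-adjacent twins inside the copy $H$. Color $V(H)$ with $\{v,u^*\}$ red and the rest blue, and apply Corollary \ref{cor-2-5}: the bicolored-$C_4$ alternative would require the edge $vu^*$ in $H$, while a red $P_4$ would require four red vertices, and both are impossible. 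Hence $H$ contains a blue $P_4=p_1p_2p_3p_4$ lying entirely in $V(H)\setminus\{v,u^*\}$, which is automatically a subgraph of $G$. Combining this blue $P_4$ with the automorphism of $G'$ swapping $v$ and $u^*$ and with the fact that $u^*$ is $G$-adjacent to every $H$-neighbor of $v$, one can re-route $H$ inside $G$ to produce a copy of $C_{2k+1}^{\square}$ in $G$ itself, the final contradiction.

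The main obstacle is the last step: extracting a genuine $C_{2k+1}^{\square}$ in $G$ from the blue $P_4$ and the twin relation. Corollary \ref{cor-2-5} is what makes this feasible, by ruling out the competing bicolored-$C_4$ alternative precisely because $v\not\sim u^*$ in $G'$; the remaining work is a short case analysis driven by $|N_H(v)\cap N_H(u^*)|\in\{0,1,2\}$, in each subcase using one of $u^*$'s $G$-edges to stand in for an $H$-edge incident to $v$ and relocate the copy entirely into $G$.
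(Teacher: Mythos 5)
Your Perron estimates and the Rayleigh computation for the switch are fine, but the proof has a genuine gap at the decisive step: showing that $G'$ (in which $v$ is made a non-adjacent twin of $u^*$) is still $C_{2k+1}^{\square}$-free in the case $u^*\in V(H)$. Colouring only $\{v,u^*\}$ red and invoking Corollary \ref{cor-2-5} tells you nothing: a monochromatic red $P_4$ and the bicoloured $C_4$ are excluded for trivial cardinality/adjacency reasons, so the corollary merely returns a blue $P_4$ inside $V(H)\setminus\{v,u^*\}$, which any copy of $C_{2k+1}^{\square}$ minus two vertices contains anyway. The subsequent claim that one can ``re-route $H$ inside $G$'' is exactly the step that fails for twin-type (Zykov/Kelmans) symmetrization: cloning $N_G(u^*)$ onto $v$ can create a copy of $C_{2k+1}^{\square}$ that uses \emph{both} $v$ and $u^*$ without $G$ containing any copy at all, because the only vertex you know to be $G$-adjacent to all three $H$-neighbours of $v$ is $u^*$ itself, and $u^*$ already occupies a different position of $H$. (This is the same phenomenon that makes naive symmetrization fail for $C_4$-free graphs: two non-adjacent twins plus two common neighbours form a new $C_4$.) To substitute some other vertex for $v$ you would need a common $G$-neighbour of $N_H(v)$ outside $V(H)$, but in your construction $N_H(v)$ consists of arbitrary vertices of $N_G(u^*)$, possibly lying in $W$ or $L$, for which you have no degree or co-degree control; so no such substitute is guaranteed.

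The paper's proof avoids this by choosing a different switch: it deletes all edges at a vertex $v_0\in L$ and joins $v_0$ to $V_2\setminus(W\cup L)$. Then any putative copy $H\ni v_0$ in the new graph has its three $H$-neighbours in $V_2\setminus(W\cup L)$, each of degree at least $(\tfrac12-10\sqrt{\varepsilon})n$ into $V_1$, so they have a common neighbour in $V_1\setminus(W\cup L\cup H)$ which replaces $v_0$ and yields a copy of $C_{2k+1}^{\square}$ in $G$, a contradiction; the spectral gain is then verified against the lower bound \eqref{ineq2}. If you want to keep your twin switch, you would have to add an argument of this kind (a controlled replacement vertex for the case $u^*\in V(H)$), which your current write-up does not provide.
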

\begin{proof}
   First we claim that $v^* \notin L$. Noticing $x_{u^*}=\max\left\{x_v\mid v\in V(G)\right\}=1$, we get
    $$
    \lambda(G)=\lambda(G)x_{u^*}=\sum_{u\in N(u^*)\cap W}x_u+\sum_{v\in N(u^*)\setminus W}x_v< |W|+(n-|W|)x_{v^*}.
    $$
    Combing with Lemma \ref{lem-3-1}, we have 
\begin{equation}\label{ineq1}
    x_{v^*}>\frac{\lambda(G)-|W|}{n-|W|}\ge\frac{\lambda(G)-|W|}{n}\ge \frac{1}{2}-\sqrt{\varepsilon}>\frac{7}{16}.    
\end{equation}
On the other hand, we have 
\[
    \lambda(G) x_{v^*}=\sum_{v\in N(v^*)}x_v=\sum_{v\in N(v^*)\cap W}x_v+\sum_{v\in N(v^*)\setminus W}x_v\le |W|+(d(v^*)-|W|)x_{v^*}.
\]
Therefore, by Eq.\eqref{ineq1}, we get
$$ d(v^*) \ge \lambda(G) +|W|-\frac{|W|}{x_{v^*}}
    \ge \lambda(G) -\frac{9}{7}|W|\geq \frac{n}{2}-\frac{9}{7}\sqrt{\varepsilon}n>\frac{n}{2}-8\sqrt{\varepsilon}n.$$
Thus, $v^*\notin L$, and so $v^*\in V(G)\setminus (W \cup L)$. 

Without loss of generality, we assume that $v^*\in V_1\setminus (W\cup L)$. By the definition of $W$, we have $d_{V_1}(v^*)\leq 2\sqrt{\varepsilon}n$. Then, we obtain that
\begin{align*}
    \lambda(G) x_{v^*}
    &=\sum_{u\in N_{W\cup L}(v^*)}x_u+\sum_{u\in N_{V_1\setminus (W\cup L)}(v^*)}x_u+\sum_{u\in N_{V_2\setminus (W\cup L)}(v^*)}x_u\\
    &<|W|+|L|x_{v^*}+2\sqrt{\varepsilon} nx_{v^*}+\sum_{u\in V_2\setminus (W\cup L)}x_u,
\end{align*}
which implies that 
\begin{equation}\label{ineq2}
    \sum_{u\in V_2\setminus (W\cup L)}x_u\ge (\lambda(G)-|L|-2\sqrt{\varepsilon} n)x_{v^*}-|W|.     
\end{equation}

Suppose to the contrary that there is a vertex $v_0\in L$. Without of loss of generality, assume that $v_0\in V_1\cap L$. Then $d(v_0)\le (\frac{1}{2}-8\sqrt{\varepsilon})n$. Let $G'$ be the graph defined as
\[G'=G-\left\{uv_0\mid uv_0\in E(G)\right\}+\left\{wv_0\mid w\in V_2 \setminus(W\cup L)\right\}.\] 

First, we claim that $G'$ is $C_{2k+1}^{\square}$-free. Otherwise, assume that $V(G')$ contains a subset $H$ which induces a copy of  $C_{2k+1}^{\square}$. Then $v_0\in H$. Assume that $N_H(v_0)=\{v_1,v_2,v_3\}$. For any vertex $v_i\in N_H(v_0)$, since $v_i\notin L$, we have $d(v_i)>(\frac{1}{2}-8\sqrt{\varepsilon})n$. Moreover, since $v_i\notin W$, we have 
    \[
        d_{V_{1}}(v_i)\geq d(v_i)-2\sqrt{\varepsilon}n\geq (\frac{1}{2}-10\sqrt{\varepsilon})n.
    \]
It yields that
\[|\bigcap\limits_{i\in[3]}N_{V_{1}}(v_i)\setminus(W \cup L\cup H)|\ge  \sum_{i=1}^{3}d_{V_1}(v_i)-2|V_1|-|W|-|L|-|H|> (1/2-36\sqrt{\varepsilon})n-2k-1>1.\]
This means that $v_1, v_2, v_{3}$ have another common neighbor $v$ in $V_1$. Therefore, the set $(H\setminus \{v_0\})\cup \{v\}]$ induces a copy of $C_{2k+1}^{\square}$ in $G$, a contradiction.

Next, by Eq.\eqref{ineq1} and Eq.\eqref{ineq2}, we have 
\begin{align*}
    \lambda(G')-\lambda(G)
    &\ge\frac{{\mathbf{x}}^T (A(G')-A(G)){\mathbf{x}}}{{\mathbf{x}^T}{\mathbf{x}}}=\frac{2x_{v_0}}{{\mathbf{x}^T}{\mathbf{x}}}\left( \sum_{u\in V_2\setminus (W\cup L)}x_u-\sum_{uv_0\in E(G)}x_u\right)\\
    &\ge \frac{2x_{v_0}}{{\mathbf{x}^T}{\mathbf{x}}}\left(\left(\left(\lambda(G)-|L|-2\sqrt{\varepsilon}n)x_{v_0}-|W|\right)-(|W|+(d(v_0)-|W|)x_{v^*}\right)\right)\\
    &\ge \frac{2x_{v_0}}{{\mathbf{x}^T}{\mathbf{x}}}((\lambda(G)-|L|-2\sqrt{\varepsilon}n-d(v_0)+|W|)x_{v^*}-2|W|)\\
    &> \frac{2x_{v_0}}{{\mathbf{x}^T}{\mathbf{x}}}(5\sqrt{\varepsilon} nx_{v^*}-2\sqrt{\varepsilon}n)\ge\frac{2x_{v_0}}{{\mathbf{x}^T}{\mathbf{x}}}\left(\frac{3}{16}\sqrt{\varepsilon}n\right)>0.
\end{align*}
This contradicts the maximality of $\lambda(G)$, and thus $L$ must be empty.
\end{proof}

\begin{lemma}\label{lem-3-7}
    For any $v\in V(G)$, we have $(1-92\sqrt{\varepsilon})x_{v^*}\leq x_v$. Furthermore, $x_v\geq \frac{1}{2}-47\sqrt{\varepsilon}$ for any $v\in V(G)$.
\end{lemma}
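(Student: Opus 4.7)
The inequality $x_v\geq(1-92\sqrt{\eps})x_{v^*}$ is equivalent to $x_{v^*}-x_v\leq 92\sqrt{\eps}\,x_{v^*}$, and since $x_{v^*}\geq\tfrac12-\sqrt{\eps}>\tfrac{7}{16}$ by Eq.~\eqref{ineq1}, it suffices to establish the additive statement $x_v\geq x_{v^*}-O(\sqrt{\eps})$; once this is in hand, a short calculation gives the ``moreover'' bound from $(1-92\sqrt{\eps})(\tfrac12-\sqrt{\eps})\geq\tfrac12-47\sqrt{\eps}$. If $x_v\geq x_{v^*}$ there is nothing to do, so throughout the proof I fix $v$ with $x_v<x_{v^*}$. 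My plan is to derive a contradiction with the extremality of $\lambda(G)$ via a local edge--swap on $v$, in the spirit of Lemma~\ref{lem-3-5}.

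Specifically, let $S=\bigl(N(v^*)\cap(V_2\setminus W)\bigr)\setminus\{v\}$ and define $G''$ by deleting all edges of $G$ incident to $v$ and then joining $v$ to every vertex of $S$. Two things then need to be checked: \textbf{(a)} $G''$ remains $C_{2k+1}^{\square}$-free, and \textbf{(b)} if $x_v$ is too small, then the Rayleigh quotient strictly increases, forcing $\lambda(G'')>\lambda(G)$.

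For (a), since $G$ is $C_{2k+1}^{\square}$-free and only edges at $v$ have changed, any copy $H$ of $C_{2k+1}^{\square}$ in $G''$ must contain $v$; its three neighbours $v_1,v_2,v_3\in V(H)$ of $v$ lie in $S\subseteq V_2\setminus W$, so each satisfies $d_{V_1}(v_i)\geq(\tfrac12-10\sqrt{\eps})n$. Inclusion--exclusion then yields
$$\Bigl|\bigcap_{i=1}^{3}N_{V_1}(v_i)\Bigr|\geq 3(\tfrac12-10\sqrt{\eps})n-2|V_1|\geq(\tfrac12-33\sqrt{\eps})n,$$
and after discarding $W$, $V(H)$ and $\{v\}$ (total size at most $\sqrt{\eps}n+4k+3$) a common neighbour $w\in V_1\setminus(W\cup V(H)\cup\{v\})$ survives. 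Swapping $v$ for $w$ in $H$ then produces a copy of $C_{2k+1}^{\square}$ inside $G$, the desired contradiction. For (b), the Rayleigh quotient difference is
$$\mathbf{x}^{T}A(G'')\mathbf{x}-\mathbf{x}^{T}A(G)\mathbf{x}=2x_v\Bigl(\sum_{u\in S}x_u-\lambda(G)x_v\Bigr),$$
and the bound $|N(v^*)\setminus S|\leq d_{V_1}(v^*)+|W|+1\leq 3\sqrt{\eps}n+1$ gives
$$\sum_{u\in S}x_u\;\geq\;\sum_{u\in N(v^*)}x_u-(3\sqrt{\eps}n+1)\;=\;\lambda(G)x_{v^*}-O(\sqrt{\eps}n).$$
Thus whenever $x_v<x_{v^*}-O(\sqrt{\eps}n)/\lambda(G)$ the difference is positive, so $\lambda(G'')>\lambda(G)$, contradicting the maximality of $\lambda(G)$. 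Combined with $\lambda(G)\geq n/2$ (Lemma~\ref{lem-3-1}), this delivers $x_v\geq x_{v^*}-O(\sqrt{\eps})$, and dividing through by $x_{v^*}\geq\tfrac{7}{16}$ gives the desired $(1-92\sqrt{\eps})x_{v^*}$.

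The step I expect to be most delicate is (a): ensuring that the common neighbour $w$ can be chosen outside $V(H)\cup W\cup\{v\}$ \emph{uniformly in $v$}. This is precisely why $S$ is restricted to $V_2\setminus W$ (so each $v_i$ has large $V_1$-degree) and why $\{v\}$ is excluded from $S$ (so $G''$ has no self-loop even when $v\in N(v^*)\cap(V_2\setminus W)$); the case $v\in W$ is then handled uniformly by the same swap since $S\subseteq V_2\setminus W$ already avoids $W$. Subject to these care-points, both (a) and (b) are routine.
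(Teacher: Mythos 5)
Your proof is correct, but it takes a genuinely different route from the paper's. The paper proves Lemma \ref{lem-3-7} by a static eigenvector-entry comparison: it writes $\lambda(G)(x_v-x_{v^*})$ as a difference of sums over $N(v)\setminus N(v^*)$ and $N(v^*)\setminus N(v)$, bounds the loss through common-neighbourhood estimates, and bootstraps in three stages (first $v\in V_1\setminus W$, then $v\in V_2\setminus W$ using the first stage, then $v\in W$ using the second), which is why the constant degrades from $56$ to $92$. You instead argue by contradiction with the extremality of $\lambda(G)$ via an edge rotation at $v$ towards $S=(N(v^*)\cap(V_2\setminus W))\setminus\{v\}$, in the style of the paper's Lemma \ref{lem-3-5}: freeness of the rotated graph follows from the same three-common-neighbours replacement trick (valid because $S$ avoids $W$ and because $L=\emptyset$ by Lemma \ref{lem-3-5}, so every vertex of $S$ has $d_{V_1}\ge(\tfrac12-10\sqrt{\varepsilon})n$), and the Rayleigh increment $2x_v\bigl(\sum_{u\in S}x_u-\lambda(G)x_v\bigr)$ together with $|N(v^*)\setminus S|\le 3\sqrt{\varepsilon}n+1$ and $\lambda(G)\ge n/2$ forces $x_v\ge x_{v^*}-O(\sqrt{\varepsilon})$. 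This treats all vertices, including those of $W$, uniformly, avoids the three-stage bootstrapping, and actually yields a sharper constant (roughly $1-15\sqrt{\varepsilon}$ in place of $1-92\sqrt{\varepsilon}$), from which both stated bounds follow since $x_{v^*}>\tfrac{7}{16}$ by Eq.~\eqref{ineq1}. Two points you should state explicitly rather than leave implicit: you assume (without loss of generality, by symmetry of the two parts) that $v^*\in V_1\setminus W$ when bounding $|N(v^*)\setminus S|\le d_{V_1}(v^*)+|W|+1\le 3\sqrt{\varepsilon}n+1$, with $S$ taken on the side opposite $v^*$; and the strict increase of the Rayleigh quotient uses $x_v>0$, i.e.\ the positivity of the Perron vector, which is the same standing assumption the paper itself uses in the proof of Lemma \ref{lem-3-5}.
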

\begin{proof}
Without loss of generality, assume $ v^* \in V_1 \setminus W $. Thus, we get
\[d(v^*)\leq (\frac{n}{2}+\frac{3\sqrt{\varepsilon}n}{2})+2\sqrt{\varepsilon}n<\frac{n}{2}+4\sqrt{\varepsilon}n.\]

First, consider $ v \in V_1 \setminus W $ with $ v \neq v^* $. From Lemma \ref{lem-3-2}, we have
\[\begin{array}{lll}
    |N(v) \cap N(v^*)|&\geq& |N_{V_2}(v) \cap N_{V_2}(v^*)|
    \geq d_{V_2}(v)+d_{V_2}(v^*)-|V_2|\\[2mm]
    &\geq& 2\left(\frac{1}{2}-10\sqrt{\varepsilon}\right)n-\left(\frac{1}{2}+\frac{3\sqrt{\varepsilon}}{2}\right)n> \frac{n}{2}-22\sqrt{\varepsilon}n,
\end{array}\]
and 
$$N(v^*)\setminus (N(v)\cup W)\leq d(v^*)-|W|-|N(v) \cap N(v^*)|\leq 25\sqrt{\varepsilon}n.$$
Noticing that $x_{v^*}>\frac{1}{2}-\sqrt{\varepsilon}$, $\lambda(G)\geq \frac{1}{2}n$ and $|W|\leq \sqrt{\varepsilon}n$, we have $3x_{v^*}>1$ and $|W|\leq \sqrt{\varepsilon}n\leq 2\sqrt{\varepsilon}\lambda(G)$. Note that
\[\begin{array}{lll}
\lambda(G)(x_v-x_{v^*})&=&\sum_{w\in N(v)\setminus N(v^*)}x_w-\sum_{w\in N(v^*)\setminus N(v)}x_w\ge -\sum_{w\in N(v^*)\setminus N(v)}x_w\\[2mm]
&\ge& -\left(\sum_{w\in W}x_w+\sum_{w\in N(v^*)\setminus (N(v)\cup W)}x_w\right)\\[2mm]
&\ge&-\left(|W|+|N(v^*)\setminus (N(v)\cup W)|x_{v^*}\right)\ge-\left(\sqrt{\varepsilon}n+25\sqrt{\varepsilon}nx_{v^*}\right)\\[2mm]
&\ge&-28\sqrt{\varepsilon}nx_{v^*}\ge -56\sqrt{\varepsilon}\lambda(G)x_{v^*}.
\end{array}\]
This yields $x_v\ge (1-56\sqrt{\varepsilon})x_{v^*}$.

Next, we consider $ v \in V_2 \setminus W $. Since $d_{V_1\setminus W}(v)\geq (\frac{1}{2}-8\sqrt{\varepsilon}-3\sqrt{\varepsilon})n\geq \frac{n}{2}-11\sqrt{\varepsilon}n$, we have
$$\sum_{w\in N_{V_1}(v)\setminus W}x_w\geq d_{V_1\setminus W}(v)(1-56\sqrt{\varepsilon})x_{v^*}> \left(\frac{1}{2}-39\sqrt{\varepsilon}\right)nx_{v^*}.$$
Similarly, we have
\[\begin{array}{lll}
\lambda(G)(x_v-x_{v^*})&=&(\sum_{w\in N(v_1)\cap W}x_w+\sum_{w\in N(v_1)\setminus W}x_w)-(\sum_{w\in N(v^*)\cap W}x_w+\sum_{w\in N(v^*)\setminus W}x_w)\\[2mm]
&\ge& \sum_{w\in N_{V_1}(v)\setminus W}x_w-(|W|+d(v^*)x_v^*)\\[2mm]
&\ge& \left(\frac{1}{2}-39\sqrt{\varepsilon}\right)nx_{v^*}-\sqrt{\varepsilon}n-(\frac{1}{2}+4\sqrt{\varepsilon})nx_{v^*}\\[2mm]
&=&-43\sqrt{\varepsilon}nx_{v^*}-\sqrt{\varepsilon}n\ge -46\sqrt{\varepsilon}nx_{v^*}\ge -92\sqrt{\varepsilon}x_{v^*}\lambda(G).
\end{array}\]
This yields $x_v\ge (1-92\sqrt{\varepsilon})x_{v^*}$ for $v\in V\setminus W$.

Finally, for $v\in W$, we have 
\[\begin{array}{lll}
\lambda(G)&=&\sum_{u\in W\cap N(v)}x_v+\sum_{u\in N_{V_1}\setminus W}x_u+\sum_{u\in N_{V_2}\setminus W}x_u\\[2mm]
&\geq& |N_{V_1}(v)\setminus W|(1-92\sqrt{\varepsilon})x_{v^*}+|N_{V_1}(v)\setminus W|(1-92\sqrt{\varepsilon})x_{v^*}\\[2mm]
& \geq & (d(v)-|W|)(1-92\sqrt{\varepsilon})x_{v^*}\\[2mm]
&\geq & (1-92\sqrt{\varepsilon})x_{v^*}
\end{array}\]
This yields $x_v\ge (1-92\sqrt{\varepsilon})x_{v^*}$ for $v\in V$. 

Since by Eq. \ref{ineq1}, we have $x_v\geq  (1-92\sqrt{\varepsilon})x_{v^*}\geq \frac{1}{2}-47\sqrt{\varepsilon}$.
\end{proof}

 \begin{lemma}\label{lem-3-8}
The graph $G$ contains non of the following graphs as a subgraph:
\begin{itemize}
\vspace{-2mm}
\item[$\operatorname{(i)}$]a cycle $C_4=v_1v_{2k+1}u_{2k+1}u_1v_1$ such that $v_1,v_{2k+1}\in V_i$ and $u_1,u_{2k+1}\in V_j$ for $\{i,j\}=\{1,2\}$;
\vspace{-2mm}
\item[$\operatorname{(ii)}$]a graph $C_6^+$ formed from a cycle $C_6=v_1v_{2k+1}u_{2k}v_{2k}v_0u_1v_1$ by adding the edge $v_{2k+1}v_0$ such that $v_0,v_1,v_{2k},v_{2k+1}\in V_i$ and $u_1,u_{2k}\in V_j$ for $\{i,j\}=\{1,2\}$.
\end{itemize}
\end{lemma}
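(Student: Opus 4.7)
The plan is to argue both parts by contradiction: assuming $G$ contains the forbidden subgraph, construct an explicit copy of $C_{2k+1}^{\square}$ in $G$ and contradict the $C_{2k+1}^{\square}$-freeness of $G$. The combinatorial workhorse is the degree information already established: since $L = \emptyset$ and $|W| \le \sqrt{\varepsilon}n$, every $v \notin W$ satisfies $d_{V_{\mathrm{opp}}(v)}(v) \ge (\tfrac{1}{2} - 10\sqrt{\varepsilon})n$, so any two same-part vertices outside $W$ share at least $(\tfrac{1}{2} - 22\sqrt{\varepsilon})n$ common neighbours in the opposite part, and any three share at least $(\tfrac{1}{2} - 36\sqrt{\varepsilon})n$; each bound dominates $|W| + O(k)$.

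For (i), I read the given $C_4$ as the two ``closing rungs'' of the prism: the within-part edges $v_1 v_{2k+1} \in V_i$ and $u_1 u_{2k+1} \in V_j$ serve as the single within-part edges of the $v$- and $u$-cycles, and $u_1 v_1, u_{2k+1} v_{2k+1}$ are matching edges. I build the remaining rungs $(v_j, u_j)$ for $j = 2, \ldots, 2k$ one at a time. Given $(v_{j-1}, u_{j-1})$, first pick $u_j \in N(u_{j-1}) \cap V_{\mathrm{opp}} \setminus (W \cup \text{used})$, then $v_j \in N(v_{j-1}) \cap N(u_j) \cap V_{\mathrm{opp}} \setminus (W \cup \text{used})$; both sets have size $\Omega(n)$ by the common-neighbour bound. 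At $j = 2k$ I impose additionally $v_{2k} \in N(v_{2k+1})$ and $u_{2k} \in N(u_{2k+1})$, reducing the available size by an $O(\sqrt{\varepsilon}n)$ term but keeping it positive. This produces a copy of $C_{2k+1}^{\square}$, the desired contradiction.

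For (ii), the $C_6^+$ supplies a $P_4$ inside $V_i$, namely $v_1 - v_{2k+1} - v_0 - v_{2k}$, whose three within-part edges I use as the (odd) within-part edges of the $v$-cycle of the prism. Set $V^1 = v_1, V^2 = v_{2k+1}, V^3 = v_0, V^4 = v_{2k}$, and let $U^1 = u_1, U^4 = u_{2k}$, exploiting the $C_6^+$-edges $u_1 v_1$ and $u_{2k} v_{2k}$ as the first two matching edges. Extend $V^5, \ldots, V^{2k+1}$ by an alternating crossing path from $v_{2k}$ back to $v_1$ using the greedy step from (i). Place $U^2, U^3 \in V_i$ so that the matching edges $V^2 U^2, V^3 U^3$ and the $u$-cycle edge $U^2 U^3$ are within $V_i$; the parities work out since the $u$-cycle then carries exactly one within-part edge and the matching carries two, while the $v$-cycle carries three. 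Finally construct $U^5, \ldots, U^{2k+1}$ by the same greedy procedure.

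The hard part is locating $U^2, U^3$ in (ii): because $u_1 \in V_j$ and $v_{2k+1} \in V_i$ lie on opposite sides of the cut, the opposite-part common-neighbour estimate does not apply directly to $(u_1, v_{2k+1})$, and similarly to $(u_{2k}, v_0)$. However, the $C_6^+$ itself already exhibits $v_0, v_1$ as common $V_i$-neighbours of $u_1, v_{2k+1}$, and $v_{2k+1}, v_{2k}$ as common $V_i$-neighbours of $u_{2k}, v_0$; combined with $|N_{V_i}(u_1)|, |N_{V_i}(u_{2k})| \ge (\tfrac{1}{2} - 10\sqrt{\varepsilon})n$, a short case analysis on whether $v_{2k+1}, v_0$ lie in $W$ (if yes, the within-$V_i$ neighbourhoods are large and intersect $N_{V_i}(u_1), N_{V_i}(u_{2k})$ in $\Omega(n)$ fresh vertices; if no, a relabelling argument replaces one of $V^2, V^3$ by a fresh vertex from the abundant crossing neighbourhood) yields fresh $U^2, U^3 \notin \{v_0, v_1, v_{2k}, v_{2k+1}\}$ with $U^2 U^3 \in E$, completing the $C_{2k+1}^{\square}$ and the contradiction.
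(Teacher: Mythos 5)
Your part (i) is essentially the paper's argument: the two within-part edges of the $C_4$ close the two odd cycles, and the remaining rungs are filled in greedily with crossing edges via common-neighbour counts. One small inaccuracy there: imposing $v_{2k}\in N(v_{2k+1})$ and $u_{2k}\in N(u_{2k+1})$ at the last step is not an ``$O(\sqrt{\varepsilon}n)$ reduction,'' because $v_{2k+1},u_{2k+1}$ may lie in $W$ and are then only guaranteed crossing degree about $(\tfrac14-4\sqrt{\varepsilon})n$ (from $L=\emptyset$ and cut-maximality); the step survives, but via the inclusion--exclusion count with three sets, as in the paper, not via a small-error deduction.

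Part (ii) has a genuine gap: your embedding requires within-part edges that the hypothesis does not supply. By placing $v_1,v_{2k+1},v_0,v_{2k}$ consecutively on the $v$-cycle you spend all three within-$V_i$ edges of the $C_6^+$ on that one cycle, and then your scheme needs three \emph{further} edges inside $V_i$, namely the matching edges $U^2v_{2k+1}$ and $U^3v_0$ and the $u$-cycle edge $U^2U^3$. At this point of the proof the only information about same-side edges is $e(V_1)+e(V_2)\le\varepsilon n^2$: the vertices $v_{2k+1},v_0$ may have no $V_i$-neighbours besides those on the given path, and even if they lie in $W$ their within-degree is only $\ge 2\sqrt{\varepsilon}n$, which does not force an intersection with $N_{V_i}(u_1)$ (whose complement inside $V_i$ can have size around $11\sqrt{\varepsilon}n$, or about $n/4$ if $u_1\in W$), and certainly does not produce an edge $U^2U^3$ of $G$ inside $V_i$ between two prescribed small sets. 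The ``relabelling'' branch does not repair this: replacing $V^2$ or $V^3$ by a fresh crossing neighbour deletes exactly the within-part path edges on which your parity count relies. In fact the allocation is essentially forced the other way round: the three within-part edges form a path, the two $(2k+1)$-cycles are vertex-disjoint and each needs an odd (hence at least one) number of within-part edges, so each cycle must take one end edge ($v_1v_{2k+1}$, respectively $v_0v_{2k}$) and the middle chord $v_0v_{2k+1}$ must be used as a matching edge, i.e.\ $v_1,v_{2k+1}$ lie on one cycle and $v_0,v_{2k}$ on the other. This is precisely the paper's construction: grow the crossing ladder from the edge $v_1u_1$ up to index $2k-2$ and close it with two new vertices $u'_{2k-1},v'_{2k-1}$ chosen in the common neighbourhoods involving $v_{2k-2},v_{2k},u_{2k-2},u'_{2k-1},u_{2k}$, so that every edge outside the $C_6^+$ is a crossing edge. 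Your proposal for (ii) would need to be rewritten along these lines.
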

\begin{proof}
We show that, if either (i) or (ii) holds, then $G$ will contains a copy of $ C_{2k+1}^{\square}$ as a subgraph, which leads to a contradiction. Without loss of generality, assume $i=1$ and $j=2$. 

First, we show that, if there exist $v_1\in V_1$ and $u_1\in V_2$ with $v_1\sim u_1$, then $G$ contains $P_{2k-1}^\square$ with $V=\{v_1,\ldots,v_{2k-1},u_1,\ldots,u_{2k-1}\}$ such that $v_1u_2v_3u_4\cdots u_{2k-2}v_{2k-1}$ and $u_1v_2u_3v_4\cdots v_{2k-2}u_{2k-1}$ form two copies of $P_{2k-1}$ and $v_iu_i\in E(G)$ for $1\le i\le 2k-1$. 

Since $d(v_1)\ge \left(\frac{1}{2}-8\sqrt{\varepsilon}\right)n$, we get $d_{V_2}(v_1)\ge \left(\frac{1}{4}-4\sqrt{\varepsilon}\right)n$ due to the maximality of the cross edges between $V_1$ and $V_2$. There exists $u_2\in V_2\setminus W$ such that $u_2\sim v_1$. This leads to $d_{V_1}(u_2)\ge \left(\frac{1}{2}-10\sqrt{\varepsilon}\right)n$. Therefore, since 
\[\begin{array}{lll}|N_{V_1}(u_1)\cap N_{V_1}(u_2)|&\ge& d_{V_1}(u_1)+d_{V_1}(u_2)-|V_1|\\[2mm]
&\ge& \left(\frac{1}{4}-4\sqrt{\varepsilon}\right)n+\left(\frac{1}{2}-10\sqrt{\varepsilon}\right)n-\left(\frac{1}{2}+\frac{3\sqrt{\varepsilon}}{2}\right)n\\[2mm]
&>&\left(\frac{1}{4}-16\sqrt{\varepsilon}\right)n,
\end{array}\]
there exists $v_2\in V_1\setminus W$ such that $v_2\in N(u_1)\cap N(u_2)$. 

Now, it remains to find $v_{i+1}$ and $u_{i+1}$, given that $v_j\in V_1\setminus W$ and $u_j\setminus W$ have already been obtained for all $2\le j\le i$. Since $v_i\not\in W$, we get $d_{V_2}(v_i)\ge\left(\frac{1}{2}-10\sqrt{\varepsilon}\right)n$. Therefore, there exists $u_{i+1}V_2\setminus(W\cup\{u_1,u_2,\ldots,u_{i}\})$ such that $u_{i+1}\sim v_i$. Also, by noticing that
\[\begin{array}{lll}|N_{V_1}(u_{i})\cap N_{V_1}(u_{i+1})|&\ge& d_{V_1}(u_i)+d_{V_1}(u_{i+1})-|V_1|\\[2mm]
&\ge& \left(\frac{1}{2}-10\sqrt{\varepsilon}\right)n+\left(\frac{1}{2}-10\sqrt{\varepsilon}\right)n-\left(\frac{1}{2}+\frac{3}{2}\sqrt{\varepsilon}\right)n\\[2mm]
&>&\left(\frac{1}{2}-22\sqrt{\varepsilon}\right)n,\end{array}\]
there exists $v_{i+1}\in V_1\setminus(W\cup\{v_1,v_2,\ldots,v_{i}\})$ such that $v_{i+1}\in N(u_i)\cap N(u_{i+1})$. Hence, the desired $P_{2k-1}^{\square}$ is obtained.

Next, assume that (i) holds. By noticing that 
\[|N_{V_2}(v_{2k-1})\cap N_{V_2}(v_{2k+1})|\ge \left(\frac{1}{2}-10\sqrt{\varepsilon}\right)n+\left(\frac{1}{4}-4\sqrt{\varepsilon}\right)n-\left(\frac{1}{2}+\frac{3}{2}\sqrt{\varepsilon}\right)n>\left(\frac{1}{4}-16\sqrt{\varepsilon}\right)n,\]
there exists $u_{2k}\in V_2\setminus (W\cup \{u_1,\ldots,u_{2k-1},u_{2k+1}\})$ such that $u_{2k}\in N(v_{2k-1})\cap N(v_{2k+1})$. Also, since
\[\begin{array}{lll}&&|N_{V_1}(u_{2k-1})\cap N_{V_1}(u_{2k})\cap N_{V_1}(u_{2k+1})|\\[2mm]
&\ge& \left(\frac{1}{2}-10\sqrt{\varepsilon}\right)n+\left(\frac{1}{2}-10\sqrt{\varepsilon}\right)n+\left(\frac{1}{4}-4\sqrt{\varepsilon}\right)n-2\left(\frac{1}{2}+\frac{3}{2}\sqrt{\varepsilon}\right)n\\[2mm]
&=&\left(\frac{1}{4}-27\sqrt{\varepsilon}\right)n,\end{array}\]
there exists $v_{2k}\in V_1\setminus (W\cup\{v_1,\ldots,v_{2k-1},v_{2k+1}\})$ such that $v_{2k}\in N_{V_1}(u_{2k-1})\cap N_{V_1}(u_{2k})\cap N_{V_1}(u_{2k+1})$. Therefore, the vertices $\{v_1,\ldots,v_{2k+1},u_1,\ldots,u_{2k+1}\}$ form a desired $C_{2k+1}^{\square}$ (see Fig.\ref{fig-3-1}).

Thirdly, assume that (ii) holds. By noticing that 
\[|N_{V_2}(v_{2k-2})\cap N_{V_2}(v_{2k})|\ge \left(\frac{1}{2}-10\sqrt{\varepsilon}\right)n+\left(\frac{1}{4}-4\sqrt{\varepsilon}\right)n-\left(\frac{1}{2}+\frac{3}{2}\sqrt{\varepsilon}\right)n>\left(\frac{1}{4}-16\sqrt{\varepsilon}\right)n,\]
there exists $u'_{2k-1}\in V_2\setminus (W\cup \{u_1,\ldots,u_{2k-2},u_{2k}\})$ such that $u'_{2k-1}\in N(v_{2k-2})\cap N(v_{2k})$. Also, since
\[\begin{array}{lll}&&|N_{V_1}(u_{2k-2})\cap N_{V_1}(u_{2k-1})\cap N_{V_1}(u_{2k})|\\[2mm]
&\ge& \left(\frac{1}{2}-10\sqrt{\varepsilon}\right)n+\left(\frac{1}{2}-10\sqrt{\varepsilon}\right)n+\left(\frac{1}{4}-4\sqrt{\varepsilon}\right)n-2\left(\frac{1}{2}+\frac{3}{2}\sqrt{\varepsilon}\right)n\\[2mm]
&=&\left(\frac{1}{4}-27\sqrt{\varepsilon}\right)n,\end{array}\]
there exists $v'_{2k-1}\in V_1\setminus (W\cup\{v_0,v_1,\ldots,v_{2k-2},v_{2k},v_{2k+1}\})$ such that $v'_{2k-1}\in N_{V_1}(u_{2k-2})\cap N_{V_1}(u_{2k-1})\cap N_{V_1}(u_{2k})$. Therefore, the vertices \[\{v_0,v_1,\ldots,v_{2k-2},v'_{2k-1},v_{2k},v_{2k+1},u_1,\ldots,u_{2k-2},u'_{2k-1},u_{2k}\}\] form a desired $C_{2k+1}^{\square}$ (see Fig.\ref{fig-3-1}).
\end{proof}
\begin{figure}[htbp]
\begin{center}
\includegraphics[width=360pt]{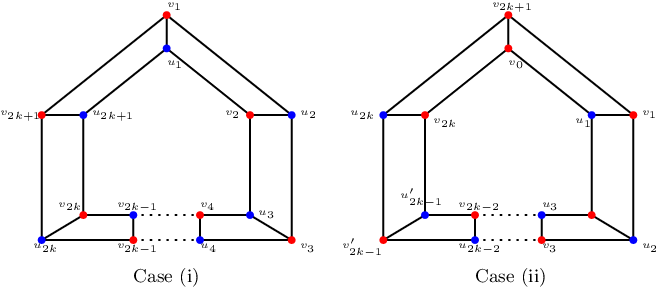}
\caption{\small Graphs used in the proof of Lemma \ref{lem-3-8}, where the vertices in $V_1$ are colored red and those in $V_2$ are colored blue.}
\label{fig-3-1}
\end{center}
\end{figure}

\begin{cor}\label{cor-3-9}
For any $i\in\{1,2\}$, the is no path $P_4=v_1v_2v_3v_4$ in $V_i$ with $v_1,v_4\not\in W$.
\end{cor}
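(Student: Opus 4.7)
The plan is to derive a contradiction by exhibiting a $C_6^+$ of the shape forbidden in Lemma \ref{lem-3-8}(ii). By symmetry assume $i=1$, and suppose for contradiction that a path $v_1v_2v_3v_4$ with $v_1,v_4\notin W$ exists in $V_1$. I would look for distinct vertices $u_1,u_2\in V_2$ satisfying $u_1\in N_{V_2}(v_1)\cap N_{V_2}(v_3)$ and $u_2\in N_{V_2}(v_2)\cap N_{V_2}(v_4)$; together with the $P_4$, the six vertices $\{v_1,v_2,v_3,v_4,u_1,u_2\}$ would then carry all seven edges of a $C_6^+$ under the identification $(v_0,v_1,v_{2k},v_{2k+1},u_1,u_{2k})\leftrightarrow(v_3,v_1,v_4,v_2,u_1,u_2)$, contradicting Lemma \ref{lem-3-8}(ii).

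To locate such $u_1,u_2$, I first lower-bound $d_{V_2}(v_j)$ for each $j\in\{1,2,3,4\}$. Because $v_1,v_4\notin W$, the definition of $W$ gives $d_{V_1}(v_j)<2\sqrt{\varepsilon}n$; combined with $L=\emptyset$ (Lemma \ref{lem-3-5}) this yields $d_{V_2}(v_j)>(\tfrac{1}{2}-10\sqrt{\varepsilon})n$ for $j\in\{1,4\}$. The interior vertices $v_2,v_3$ may lie in $W$, so the same argument is unavailable; instead I would invoke the maximality of the cut (Lemma \ref{lem-3-2}), which forces $d_{V_2}(v)\geq d_{V_1}(v)$ for every $v\in V_1$, and hence $d_{V_2}(v_j)\geq\tfrac{1}{2}d(v_j)\geq(\tfrac{1}{4}-4\sqrt{\varepsilon})n$ for $j\in\{2,3\}$, again using $L=\emptyset$.

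Combining these bounds with $|V_2|\leq(\tfrac{1}{2}+\tfrac{3}{2}\sqrt{\varepsilon})n$ from Lemma \ref{lem-3-2} and inclusion--exclusion, one obtains
\[|N_{V_2}(v_1)\cap N_{V_2}(v_3)|\geq d_{V_2}(v_1)+d_{V_2}(v_3)-|V_2|\geq\left(\tfrac{1}{4}-\tfrac{31}{2}\sqrt{\varepsilon}\right)n,\]
and the same lower bound for $|N_{V_2}(v_2)\cap N_{V_2}(v_4)|$. Both sets have size far greater than $1$ for $n$ sufficiently large, so one can pick the required distinct $u_1,u_2\in V_2$. The main (and really the only) obstacle is the treatment of the interior vertices $v_2,v_3$, for which the $W$-definition alone gives no lower bound on $d_{V_2}$; the max-cut property of the partition supplies the missing bound and is the key ingredient. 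Everything else reduces to routine counting.
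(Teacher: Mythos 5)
Your proposal is correct and follows essentially the same route as the paper: it uses $v_1,v_4\notin W$ together with $L=\emptyset$ to get $d_{V_j}\ge(\tfrac12-10\sqrt{\varepsilon})n$ for the endpoints, the max-cut property for the (possibly $W$-) interior vertices $v_2,v_3$ to get $d_{V_j}\ge(\tfrac14-4\sqrt{\varepsilon})n$, and then inclusion--exclusion against $|V_j|\le(\tfrac12+\tfrac32\sqrt{\varepsilon})n$ to find $u_1\in N_{V_j}(v_1)\cap N_{V_j}(v_3)$ and $u_2\in N_{V_j}(v_2)\cap N_{V_j}(v_4)$, producing the forbidden $C_6^+$ of Lemma \ref{lem-3-8}(ii). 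Your explicit identification of the $C_6^+$ and the remark on choosing $u_1\ne u_2$ are fine refinements of the paper's identical argument.
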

\begin{proof}
Suppose to the contrary that such path $P_4$ exists. For $j\ne i$, we have
\[|N_{V_j}(v_1)\cap N_{V_j}(v_3)|\ge \left(\frac{1}{2}-10\sqrt{\varepsilon}\right)n+\left(\frac{1}{4}-4\sqrt{\varepsilon}\right)n-\left(\frac{1}{2}+\frac{3}{2}\sqrt{\varepsilon}\right)n>\left(\frac{1}{4}-16\sqrt{\varepsilon}\right)n.\]
This means there exists $u_1\in N_{V_j}(v_1)\cap N_{V_j}(v_3)$. Similarly, there exists $u_2\in N_{V_j}(v_2)\cap N_{V_j}(v_4)$. Thus, $\{v_1,\ldots,v_4,u_1,u_2\}$ form a $C_6^+$ defined in (ii) of Lemma \ref{lem-3-8}, a contradiction. 
\end{proof}

\begin{cor}\label{cor-3-10}
    For any $i\in \{1,2\}$, we get the following statements:
    \begin{itemize}
        \item[(i)] $W_i$ is an independent set;
        \item[(ii)] If $v_1v_2$ is an edge in $V_i\setminus W_i$, then $v_1,v_2\in V_i\setminus(W_i\cup N_{V_i}(W_i))$;
        \item[(iii)] Each non-trivial component in $V_i\setminus(W_i\cup N_{V_i}(W_i))$ is either a triangle or a star.
    \end{itemize}
\end{cor}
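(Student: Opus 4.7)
My plan is to derive all three parts from Corollary \ref{cor-3-9}---which forbids a $P_4$ inside $V_i$ with both endpoints outside $W$---by short one- or two-step extension arguments. The key quantitative input is that every $w\in W_i$ satisfies $d_{V_i}(w)\ge 2\sqrt{\varepsilon}n$ by definition, while by Lemma \ref{lem-3-4} at most $|W|\le\sqrt{\varepsilon}n$ of these $V_i$-neighbors lie in $W$; hence $w$ has at least $\sqrt{\varepsilon}n$ neighbors in $V_i\setminus W$, which gives ample slack to avoid any previously chosen vertices. I will also repeatedly use that $W\cap V_i=W_i$ (since $W_j\subseteq V_j$), so membership in $V_i\setminus W_i$ already means membership in $V_i\setminus W$.

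For (i), I would argue by contradiction: suppose $w_1w_2\in E(G)$ with $w_1,w_2\in W_i$. Using the slack above, pick $a\in (N(w_1)\cap V_i)\setminus (W\cup\{w_2\})$ and then $b\in (N(w_2)\cap V_i)\setminus (W\cup\{w_1,a\})$. Then $aw_1w_2b$ is a $P_4$ inside $V_i$ with $a,b\notin W$, contradicting Corollary \ref{cor-3-9}.

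For (ii), suppose $v_1v_2$ is an edge of $G[V_i\setminus W_i]$ but (by symmetry) $v_1\in N_{V_i}(W_i)$, so some $w\in W_i$ is adjacent to $v_1$. Choose $a\in (N(w)\cap V_i)\setminus (W\cup\{v_1,v_2\})$, which exists by the same slack. Since $v_2\in V_i\setminus W_i$ gives $v_2\notin W$, the path $awv_1v_2$ is a $P_4$ in $V_i$ with both endpoints outside $W$, again contradicting Corollary \ref{cor-3-9}.

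For (iii), let $H:=G[V_i\setminus(W_i\cup N_{V_i}(W_i))]$. Every vertex of $H$ lies outside $W$, so any $P_4$ in $H$ would violate Corollary \ref{cor-3-9}; hence $H$ is $P_4$-free as a (not necessarily induced) subgraph. The remaining step is the classical classification: a connected $P_4$-free graph on at least two vertices is either a star or a triangle. Indeed, a $P_4$-free tree has diameter at most $2$ and is therefore a star; a $P_4$-free connected graph containing a cycle must have girth $3$ (any $C_\ell$ with $\ell\ge 4$ contains a $P_4$), and an additional vertex would, via a shortest path to the triangle, yield an edge from outside the triangle to a triangle vertex, immediately producing a $P_4$. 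I do not anticipate a substantive obstacle; the only care required is the bookkeeping in (i) and (ii) to ensure the extending vertices are genuinely distinct from those already fixed, which is exactly what the $\sqrt{\varepsilon}n$ lower bound on $|N(w)\cap(V_i\setminus W)|$ supplies.
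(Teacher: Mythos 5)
Your proposal is correct and follows essentially the same route as the paper: each part is reduced to Corollary \ref{cor-3-9} by extending the offending edge or configuration to a $P_4$ inside $V_i$ with both endpoints outside $W$, using $d_{V_i}(w)\ge 2\sqrt{\varepsilon}n>|W|+3$ to find the fresh endpoints, and part (iii) rests on the same classification of connected $P_4$-free graphs as stars or triangles (which you spell out in slightly more detail than the paper does).
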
 
\begin{proof}
If there is an edge $v_2v_3$ in $W_i$, then there is a path $v_1v_2v_3v_4$ such that $v_1,v_4\in V_i\setminus W_i$ because $N_{V_i}(v_t)\ge 2\sqrt{\varepsilon}n>|W|+3$ for $t\in\{2,3\}$, which contradicts Corollary \ref{cor-3-9}. Hence, (i) holds.
If there is an edge $v_1v_2$ in $V_i\setminus W_i$ with $v_2\in N_{V_i}(W)$, then let $v_3\in W_i$ with $v_3\sim v_2$. Since $N_{V_i}(v_3)\ge 2\sqrt{\varepsilon}n>|W|+3$, there exists $v_4\in V_i\setminus (W\cup \{v_1,v_2\})$ such that $v_3\sim v_4$, which contradicts Corollary \ref{cor-3-9}. Hence, (ii) holds. 

For convenience, denote by $\overline{V_i}=V_i\setminus(W_i\cup N_{V_i}(W_i))$. Let $H$ be a non-trivial component of $G[\overline{V_i}]$. If $H$ contains a triangle $C_3=v_1v_2v_3v_1$, then $H=C_3$ since otherwise there would be a vertex $v_4\in V(H)$ such that $v_4\sim v_i$ for some $i\in \{1,2,3\}$. This yield a $P_4=v_1v_2v_3v_4$ with $v_1,v_4\not\in W$, which contradicts Corollary \ref{cor-3-9}. If $H$ is $C_3$-free, since $H$ is also $P_4$-free due to Corollary \ref{cor-3-9}, then $H$ is a star. Hence, (iii) holds. 
\end{proof}

\begin{lemma}\label{lem-3-11}
For any $i\in\{1,2\}$, if $w_1,w_2\in W_i$ share a common neighbor in $V_i$, then $N_{V_j}(w_1)\cap N_{V_j}(w_2)=\emptyset$ where $j\ne i$.
\end{lemma}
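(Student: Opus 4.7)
The plan is to derive a contradiction by locating a copy of the forbidden $C_6^+$ from Lemma~\ref{lem-3-8}(ii). Assume for contradiction there exists $u \in N_{V_j}(w_1)\cap N_{V_j}(w_2)$. The four vertices $v,w_1,w_2,u$ already form a $4$-cycle with a $3$-to-$1$ split across $V_i$ and $V_j$, which by itself is not forbidden, so I will need to produce one additional vertex in $V_i$ and one more in $V_j$ to upgrade this into a $C_6^+$.

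First I would record a key degree fact about $v$: since $W_i$ is independent by Corollary~\ref{cor-3-10}(i), the edge $vw_1$ forces $v \in V_i \setminus W$. Because $v \in N_{V_i}(W_i)$, Corollary~\ref{cor-3-10}(ii) rules out any neighbor of $v$ inside $V_i \setminus W$, so $N_{V_i}(v) \subseteq W_i$ and hence
\[d_{V_j}(v) \geq d(v) - |W_i| \geq \left(\tfrac12 - 9\sqrt{\varepsilon}\right)n,\]
using $L = \emptyset$ (Lemma~\ref{lem-3-5}) and $|W| \leq \sqrt{\varepsilon}n$ (Lemma~\ref{lem-3-4}).

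Next I would exploit $w_1 \in W_i$. Since $d_{V_i}(w_1) \geq 2\sqrt{\varepsilon}n > |W| + 2$, I can select $v' \in N_{V_i}(w_1) \setminus (W \cup \{v, w_2\})$. Because $v' \notin W$, the analogous estimate yields $d_{V_j}(v') \geq (\tfrac12 - 10\sqrt{\varepsilon})n$, and then
\[|N_{V_j}(v) \cap N_{V_j}(v')| \geq d_{V_j}(v) + d_{V_j}(v') - |V_j| > 1,\]
so I may pick $u' \in N_{V_j}(v) \cap N_{V_j}(v')$ with $u' \neq u$.

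Finally I would verify that $\{v', w_1, v, w_2, u, u'\}$ realizes the $C_6^+$ of Lemma~\ref{lem-3-8}(ii) under the identification $(v_1, v_{2k+1}, v_0, v_{2k}, u_1, u_{2k}) = (v', w_1, v, w_2, u', u)$: the path $v'w_1vw_2$ is a $P_4$ in $V_i$; $u' \in V_j$ is adjacent to $v'$ and $v$; $u \in V_j$ is adjacent to $w_1$ and $w_2$; and the assumed common $V_i$-neighbor $v$ of $w_1, w_2$ supplies the crucial extra edge $w_1v$ that upgrades a plain $C_6$ into a $C_6^+$. The main conceptual obstacle is recognizing which forbidden structure to target: a common $V_j$-neighbor alone yields only the harmless $3$-$1$ $C_4$, and a common $V_i$-neighbor alone yields nothing, but combined they furnish precisely the diagonal needed to invoke Lemma~\ref{lem-3-8}(ii).
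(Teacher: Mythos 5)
Your proposal is correct and follows essentially the same route as the paper: take a second $V_i$-neighbor $v'$ of $w_1$, use the degree bounds for vertices outside $W$ to find a common $V_j$-neighbor $u'$ of $v$ and $v'$, and assemble $\{v',w_1,v,w_2,u,u'\}$ into the $C_6^+$ forbidden by Lemma~\ref{lem-3-8}(ii). The only difference is that you spell out details the paper leaves implicit (that $v\notin W$ via the independence of $W_i$, the choice of $v'$ avoiding $W\cup\{v,w_2\}$, and $u'\neq u$), which only strengthens the write-up.
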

\begin{proof}
Suppose to the contrary that $u\in N_{V_j}(w_1)\cap N_{V_j}(w_2)$. Since $d_{V_i}(w_1)\ge 2\sqrt{\varepsilon}n>2$, there exists $v'\in N_{V_i}(w_1)\setminus \cup\{v\}$. By noticing that
\[|N_{V_j}(v)\cap N_{V_j}(v')|\ge\left( \frac{1}{2} - 10\sqrt{\varepsilon} \right)n+\left( \frac{1}{2} - 10\sqrt{\varepsilon} \right)n-\left( \frac{1}{2} +\frac{3}{2}\sqrt{\varepsilon} \right)n>\left( \frac{1}{2} - 22\sqrt{\varepsilon} \right)n,\]
there exists $u'\in N_{V_j}(v)\cap N_{V_j}(v')$. Thus, $\{w_1,w_2,v,v',u,u'\}$ form a $C_6^+$ defined in Lemma \ref{lem-3-8}, a contradiction.
\end{proof}

For $i\in\{1,2\}$, let $W_i^*=\{w\in W_i\mid \exists w'\in W_i\text{ such that }N_{V_i}(w)\cap N_{V_i}(w')\ne\emptyset\}$, and $W^*=W_1^*\cup W_2^*$.
\begin{lemma}\label{lem-3-12}
 For any $i\in\{1,2\}$, if $w\in W_i^*$, then 
 \[\left(\frac{1}{4}-14\sqrt{\varepsilon}\right) n\leq d_{V_i}(w)\leq d_{V_j}(w)\leq \left(\frac{1}{4}+6\sqrt{\varepsilon}\right) n,\] 
where $j\ne i$. Furthermore, $|W^*|\leq 8$.
\end{lemma}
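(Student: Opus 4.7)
The plan is to deduce the degree bounds from Lemma \ref{lem-3-11} combined with the max-cut property of the partition, and then to bound $|W^*|$ by a double-counting argument whose key ingredient is the degree bound just proved.

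For the degree bounds, fix $w\in W_i^*$ and choose any witness $w'\in W_i$ with $N_{V_i}(w)\cap N_{V_i}(w')\ne\emptyset$. Lemma \ref{lem-3-11} gives $N_{V_j}(w)\cap N_{V_j}(w')=\emptyset$. Since $L=\emptyset$ by Lemma \ref{lem-3-5}, one has $d(w')\ge(\tfrac12-8\sqrt\varepsilon)n$; the maximality of the cut applied to $w'\in V_i$ forces $d_{V_j}(w')\ge d(w')/2\ge(\tfrac14-4\sqrt\varepsilon)n$, and combining this disjointness with the upper bound on $|V_j|$ produces $d_{V_j}(w)\le(\tfrac14+6\sqrt\varepsilon)n$. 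The inequality $d_{V_i}(w)\le d_{V_j}(w)$ is a direct application of max-cut to $w$, while subtracting the upper bound on $d_{V_j}(w)$ from $d(w)\ge(\tfrac12-8\sqrt\varepsilon)n$ yields the lower bound $d_{V_i}(w)\ge(\tfrac14-14\sqrt\varepsilon)n$.

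For $|W^*|\le 8$, I first prove the auxiliary claim that every $v\in V_i\setminus W_i$ has at most two neighbors in $W_i^*$. Indeed, if three such neighbors $w_1,w_2,w_3\in W_i^*$ existed, then each pair would share $v$ as a common $V_i$-neighbor and Lemma \ref{lem-3-11} would force $N_{V_j}(w_1),N_{V_j}(w_2),N_{V_j}(w_3)$ to be pairwise disjoint; the lower bound on $d_{V_j}(w_l)$ already established would then give $3(\tfrac14-14\sqrt\varepsilon)n\le|V_j|\le(\tfrac12+\tfrac{3\sqrt\varepsilon}{2})n$, which fails for $\varepsilon$ small.

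With this claim in hand, Corollary \ref{cor-3-10}(i) tells us that $W_i$ is independent, so every neighbor of a vertex in $W_i^*$ that lies in $V_i$ is in fact in $V_i\setminus W_i$. Double-counting edges between $W_i^*$ and $V_i\setminus W_i$ yields
\[|W_i^*|\bigl(\tfrac14-14\sqrt\varepsilon\bigr)n\;\le\;\sum_{w\in W_i^*}d_{V_i}(w)=\sum_{v\in V_i\setminus W_i}|N(v)\cap W_i^*|\;\le\;2|V_i\setminus W_i|\le(1+3\sqrt\varepsilon)n,\]
and the ratio of the two sides is strictly less than $5$ for $\varepsilon$ sufficiently small, so $|W_i^*|\le 4$, which gives $|W^*|=|W_1^*|+|W_2^*|\le 8$. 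The main thing to watch is a subtle bookkeeping point: Lemma \ref{lem-3-11} must be applied to the \emph{witness} $w'\in W_i$ (not to $w$), and the lower bound on $d_{V_j}(w')$ used in the proof of the upper bound on $d_{V_j}(w)$ must come purely from max-cut and $L=\emptyset$, not from the present lemma, so as to avoid any circular dependence.
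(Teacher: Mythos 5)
Your proposal is correct and follows essentially the same route as the paper: the upper bound on $d_{V_j}(w)$ comes from the disjointness of $N_{V_j}(w)$ and $N_{V_j}(w')$ guaranteed by Lemma \ref{lem-3-11} together with $d_{V_j}(w')\ge(\tfrac14-4\sqrt{\varepsilon})n$ (from $L=\emptyset$ and max-cut), the other two inequalities follow by max-cut and subtraction, and $|W^*|\le 8$ is obtained from the same ``at most two $W_i^*$-neighbours per vertex'' claim and the same double count (you bound each $|W_i^*|\le 4$ separately rather than summing the two edge counts, which is an immaterial difference).
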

\begin{proof}
Suppose to the contrary that there exists $w\in W_i^*$ such that $d_{V_j}(w)\ge\left(\frac{1}{4}+6\sqrt{\varepsilon}\right) n$. Let $w'\in W_i$ such that $N_{V_i}(w)\cap N_{V_i}(w')\ne\emptyset$. By noticing that
\[|N_{V_j}(w)\cap N_{V_j}(w')|\ge \left(\frac{1}{4}+6\sqrt{\varepsilon}\right) n+\left(\frac{1}{4}-4\sqrt{\varepsilon}\right) n-\left(\frac{1}{2}+\frac{3}{2}\sqrt{\varepsilon}\right) n>0,\]
we get $N_{V_j}(w)\cap N_{V_j}(w')\ne\emptyset$, which contradicts Lemma \ref{lem-3-11}. Hence, $d_{V_j}(w)< \left(\frac{1}{4}+6\sqrt{\varepsilon}\right) n$. This in turn leads to 
\[d_{V_i}(w)\ge \left(\frac{1}{2}-8\sqrt{\varepsilon}\right) n-\left(\frac{1}{4}+6\sqrt{\varepsilon}\right) n=\left(\frac{1}{4}-14\sqrt{\varepsilon}\right) n.\]

Next, for any $v\in V_i$, we claim that $d_{W_i^*}(v)\le 2$. Otherwise, assume $w_1,w_2,w_3\in N_{W_i^*}(v)$. By the arguments above, we have $d_{V_j}(w_t)\ge \left(\frac{1}{4}-14\sqrt{\varepsilon}\right) n$. This means $\sum_{t=1}^3d_{V_j}(w_t)>n$. Therefore, there must exists $\{t_1,t_2\}\subset\{1,2,3\}$ such that $N_{V_j}(w_{t_1})\cap N_{V_j}(w_{t_2})\ne\emptyset$, which contradicts Lemma \ref{lem-3-11}. Counting the number of edges between $W_i^*$ and $N_{V_i}(W_i^*)$, we get
\[\left(\frac{1}{4}-14\sqrt{\varepsilon}\right) n|W_i^*|\le e(W_i^*,N_{V_i}(W_i^*))\le 2|N_{V_i}(W_i^*)|\le 2|V_i|.\]
Therefore, we have
\[\left(\frac{1}{4}-14\sqrt{\varepsilon}\right) n|W^*|\le e(W_1^*,N_{V_1}(W_1^*))+e(W_2^*,N_{V_2}(W_2^*))\le 2(|N_{V_1}(W_1^*)|+|N_{V_2}(W_2^*)|)\le 2n.\]
This yields $|W^*|\le 8$.
\end{proof}

For $i=1,2$, let $\mathcal{X}^{(i,\triangle)}$ (resp. $\mathcal{X}^{(i,*)}$)  be the collection of components in $G[V_i]$ that are isomorphic to triangles (resp. stars), and let $\mathcal{X}^{(i,o)}$ denote the set of all remaining non-trivial components in $G[V_i]$.
Denote by $\mathcal{X}^{(i)}=\mathcal{X}^{(i,\triangle)}\cup \mathcal{X}^{(i,*)}\cup \mathcal{X}^{(i,o)}$.
\begin{lemma}\label{lem-3-13}
For $1\le i\le 2$, let $A_i\subseteq V_i$ be such that $G[A_i]\in\mathcal{X}^{(i,*)}$ and let $B_i\subseteq V_i$ be such that $G[B_i]\in \mathcal{X}^{(i,\triangle)}$. If $|A_i|=a_i$ and $|B_i|=b_i$, then $e(A_1,A_2)\leq a_1a_2-\min\{a_1,a_2\}$, $e(A_1,B_2)\leq 3a_1-3$, and $e(B_1,B_2)\leq 5$.
\end{lemma}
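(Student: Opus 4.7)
The plan is to exploit Lemma \ref{lem-3-8}(i), which forbids any $C_4 = v_1 v_{2k+1} u_{2k+1} u_1 v_1$ with the edge $v_1 v_{2k+1}$ lying in $V_i$ and $u_1 u_{2k+1}$ lying in $V_j$. For each pair of components $(C, D)$ with $C \subseteq V_1$ and $D \subseteq V_2$, this turns into a combinatorial restriction on the bipartite cross graph between $C$ and $D$; in each of the three cases I will bound the number of cross edges by a short case analysis.

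For $e(A_1, A_2) \le a_1 a_2 - \min\{a_1, a_2\}$, I write $A_i = \{c_i\} \cup \{\ell_1^{(i)}, \ldots, \ell_{a_i-1}^{(i)}\}$ with centre $c_i$, so the only internal edges of $A_i$ are the star edges $c_i \ell_j^{(i)}$. For each pair of star edges $(c_1 \ell_i^{(1)}, c_2 \ell_j^{(2)})$, Lemma \ref{lem-3-8}(i) yields two forbidden cross patterns: the ``parallel'' pattern $\{c_1 c_2,\, \ell_i^{(1)} \ell_j^{(2)}\} \subseteq E(G)$ and the ``crossed'' pattern $\{c_1 \ell_j^{(2)},\, \ell_i^{(1)} c_2\} \subseteq E(G)$. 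I split on whether $c_1 c_2 \in E(G)$. If yes, every leaf-to-leaf edge is forbidden, and the crossed pattern forces at least one of the sets $\{j : c_1 \ell_j^{(2)} \in E\}$ or $\{i : \ell_i^{(1)} c_2 \in E\}$ to be empty, giving at most $\max\{a_1, a_2\}$ edges. If no, the leaf-to-leaf edges are unconstrained but the crossed pattern still kills one of the two sets above, giving at most $\max\{a_1(a_2 - 1),\, a_2(a_1 - 1)\}$. Assuming $a_1 \le a_2$, the worst case across both is $a_1(a_2 - 1) = a_1 a_2 - a_1$, which matches the claim.

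For $e(A_1, B_2) \le 3a_1 - 3$, write $A_1 = \{c_1, \ell_1, \ldots, \ell_{a_1-1}\}$ and $B_2 = \{t_1, t_2, t_3\}$. Let $X := \{q : c_1 t_q \in E(G)\}$. Lemma \ref{lem-3-8}(i) applied to the pair $(c_1 \ell_i, t_p t_q)$ tells us that if $c_1 t_q \in E(G)$ and $p \ne q$, then $\ell_i t_p \notin E(G)$. I case on $|X|$. If $|X| \ge 2$, the forbidden sets cover all of $\{1, 2, 3\}$ and no leaf edge to $B_2$ survives, so the cross graph has at most $3$ edges. If $|X| = 1$, leaves can only connect to the single vertex of $X$, giving at most $a_1$ edges. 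If $|X| = 0$, the leaves are unconstrained and contribute at most $3(a_1 - 1)$ edges; this is the maximum, namely $3a_1 - 3$ (for $a_1 \ge 2$).

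For $e(B_1, B_2) \le 5$, both components are triangles, so any two distinct vertices in $B_i$ are joined by an internal edge. Hence for every choice of distinct $p, q$ in $\{1,2,3\}$ and distinct $r, s$ in $\{1,2,3\}$, Lemma \ref{lem-3-8}(i) forbids both ``diagonal'' matchings inside the $2 \times 2$ block of cross edges with rows $\{s_p, s_q\}$ and columns $\{t_r, t_s\}$, so each such block has at most two cross edges. A double counting over the nine $2 \times 2$ blocks (each cross edge belongs to four of them) gives $4\, e(B_1, B_2) \le 2 \cdot 9 = 18$, hence $e(B_1, B_2) \le 4 \le 5$. The main technical obstacle is organising the case analysis of the first two inequalities so that no configuration is overlooked; the third follows from a routine double-counting argument once the $2 \times 2$ restriction has been extracted from Lemma \ref{lem-3-8}(i).
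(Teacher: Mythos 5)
Your proof is correct, and it rests on the same engine as the paper's: every bound is extracted from Lemma \ref{lem-3-8}(i), which forbids a $C_4$ consisting of one edge inside $V_1$, one edge inside $V_2$, and a cross matching between their endpoints. The organisation, however, is genuinely different. The paper proves the star--star bound by contradiction: assuming $e(A_1,A_2)>a_1a_2-a_1$ forces every vertex of $A_2$ to have a neighbour in $A_1$, and a case split on whether the centres are adjacent then collapses the count; the star--triangle and triangle--triangle bounds are dispatched rather tersely (``by the arguments above'', with an ad hoc check for $a_1=2$). You instead bound the cross graph directly: for star--star you observe that the crossed pattern kills one of the two centre--leaf edge sets and (when the centres are adjacent) the parallel pattern kills all leaf--leaf edges, which gives $\max\{a_1,a_2\}$ resp.\ $a_1(a_2-1)$ without any contradiction argument (note you should say explicitly that $a_1\ge 2$, since stars here are non-trivial components, to see that $\max\{a_1,a_2\}\le a_1a_2-\min\{a_1,a_2\}$); for star--triangle your case analysis on $|X|=|\{q: c_1t_q\in E\}|$ covers uniformly what the paper handles by reduction plus the special case $a_1=2$; and for triangle--triangle your double count over the nine $2\times 2$ blocks (each block carries at most two cross edges, each cross edge lies in four blocks) gives $e(B_1,B_2)\le 4$, which is slightly sharper than the paper's stated $5$ and of course suffices. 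In short, same key lemma, but your direct forbidden-pattern counting is more self-contained where the paper is sketchy, at the cost of a slightly longer case analysis for the first inequality.
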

\begin{proof}
Without loss of generality, assume $a_1\le a_2$. Let $A_i=\{v_1^{(i)},v_2^{(i)},\ldots,v_{a_i}^{(i)}\}$ for $1\le i\le 2$ where $v_1^{(i)}$ is the center of $G[A_i]$. 

Suppose to the contrary that $e(A_1,A_2)> a_1a_2-a_1$. Then $N_{V_1}(v_t^{(2)})\ne \emptyset$ for any $1\le t\le a_2$. If $v_1^{(1)}\sim v_1^{(2)}$, then $v_r^{(1)}\not\sim v_t^{(2)}$ for any $2\le r\le a_1$ and $2\le t\le a_2$ since otherwise there would exist a $C_4$ as defined in Lemma \ref{lem-3-8}. Therefore, $N_{V_1}(v_t^{(2)})=\{v_1^{(1)}\}$ for $2\le t\le a_2$. Moreover, if there is a vertex $v_r^{(1)}$ adjacent to $v_1^{(2)}$, then $\{v_1^{(1)},v_r^{(1)},v_1^{(2)},v_2^{(2)}\}$ also forms a $C_4$ as defined in Lemma \ref{lem-3-8}. Thus, $N_{V_2}(v_r^{(1)})=\emptyset$ for $2\le r\le a_1$. Hence, by $a_2\ge a_1\ge 2$, we get $e(A_1,A_2)=a_2<a_1a_2-a_1$, a contradiction. If $v_1^{(1)}\not\sim v_1^{(2)}$, then we may assume $v_1^{(2)}\sim v_2^{(1)}$. Note that, $v_1^{(1)}\not\sim v_t^{(2)}$ for any $2\le t\le a_2$ since otherwise $\{v_1^{(1)},v_2^{(1)},v_1^{(2)},v_t^{(2)}\}$ would form a $C_4$ as defined in Lemma \ref{lem-3-8}. Therefore, $N_{V_2}(v_1^{(1)})=\emptyset$. This indicates $e(A_1,A_2)\le a_1a_2-a_2\le a_1a_2-a_1$, a contradiction. 

 Since $B_2$ contains a star on $3$ vertices, by the arguments above, we get $e(A_1,B_2)\le 3a_1-\min\{a_1,3\}$. If $a_1\ge 3$ then we are done. If $a_1=2$, one could easily obtained a $C_4$ as defined in Lemma \ref{lem-3-8} whenever $e(A_1,B_2)>3$. One could easily get $e(B_1,B_2)\le 5$ similarly.
\end{proof}

\begin{lemma}\label{lem-3-14}
    $\mathcal{X}^{(1,o)}\cup \mathcal{X}^{(2,o)} =\emptyset$.
\end{lemma}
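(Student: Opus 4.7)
My plan is a proof by contradiction: assume some $H\in\mathcal{X}^{(1,o)}$ exists (the case $\mathcal{X}^{(2,o)}\ne\emptyset$ is symmetric), then build a $C_{2k+1}^{\square}$-free graph $G'$ with $\lambda(G')>\lambda(G)$, contradicting the extremality of $G$.

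First I would pin down the structure of $H$. Combining the three parts of Corollary \ref{cor-3-10}, every edge of $G[V_1]$ either lies in $V_1\setminus(W_1\cup N_{V_1}(W_1))$ (and so belongs to a triangle or star component, hence to $\mathcal{X}^{(1,\triangle)}\cup\mathcal{X}^{(1,*)}$) or joins $W_1$ to $N_{V_1}(W_1)$. Therefore $H$ must be a connected bipartite subgraph with parts $W_H:=W_1\cap V(H)$ and $V_H:=V(H)\setminus W_H\subseteq N_{V_1}(W_1)$. Because $H$ is not a star, $|W_H|\ge 2$ and $|V_H|\ge 2$; the bipartiteness plus connectedness then force some pair $w_1,w_2\in W_H$ to share a common neighbor $v_0\in V_H$. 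In particular $w_1,w_2\in W_1^*$, so Lemmas \ref{lem-3-11} and \ref{lem-3-12} yield $N_{V_2}(w_1)\cap N_{V_2}(w_2)=\emptyset$ and the sandwich $d_{V_j}(w_m)\in[(1/4-14\sqrt{\varepsilon})n,(1/4+6\sqrt{\varepsilon})n]$, which in turn gives $|V_H|\ge(1/4-14\sqrt{\varepsilon})n$.

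Next I would perform a local surgery. Since $|W_H|\ge 2$, pick $w\in W_H\setminus\{u^*\}$ and define
\[G':=G-\{wv:v\in N_G(w)\setminus N_G(u^*)\}+\{wv:v\in N_G(u^*)\setminus(N_G(w)\cup\{w\})\},\]
so that $N_{G'}(w)=N_G(u^*)\setminus\{w\}$. Using the eigenvector equations for $w$ and $u^*$ together with $x_{u^*}=1$, the Rayleigh quotient gives
\[\mathbf{x}^TA(G')\mathbf{x}-\mathbf{x}^TA(G)\mathbf{x}=2x_w\bigl(\lambda(G)(1-x_w)-x_w[w\sim u^*]\bigr).\]
The Perron positivity and $w\ne u^*$ give $x_w<1$; a quantitative form of this gap (controlled via Lemma \ref{lem-3-7} combined with the degree bound $d(w)\le(1/2+12\sqrt{\varepsilon})n$ from Lemma \ref{lem-3-12} and the lower estimate $\lambda(G)\gtrsim n/2+3/2$ from Lemma \ref{lem-3-1}) forces the bracket to be strictly positive for $n$ large enough, and hence $\lambda(G')>\lambda(G)$.

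Finally I would show that $G'$ is still $C_{2k+1}^{\square}$-free. Any copy $T$ of $C_{2k+1}^{\square}$ in $G'$ must contain $w$ (otherwise $T\subseteq G$). Its three $T$-neighbors $a,b,c$ of $w$ lie in $N_{G'}(w)=N_G(u^*)\setminus\{w\}$, hence $u^*\sim a,b,c$ in $G$. If $u^*\notin V(T)$, the swap $T\to T-w+u^*$ yields a $C_{2k+1}^{\square}$ in $G$, a contradiction. If $u^*\in V(T)$, a common-neighbor argument exactly in the spirit of the proof of Lemma \ref{lem-3-8}, using $L=\emptyset$ and the lower bounds on $d_{V_1}(a),d_{V_1}(b),d_{V_1}(c)$, produces $w'\in V_1\setminus(V(T)\cup W)$ with $w'\sim a,b,c$ in $G$, and $T-w+w'$ is again a copy of $C_{2k+1}^{\square}$ in $G$. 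Both cases contradict $C_{2k+1}^{\square}$-freeness of $G$.

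The main obstacle is the third step: controlling the gap $1-x_w$ quantitatively, especially in the adjacent case $w\sim u^*$, where one must ensure $\lambda(G)(1-x_w)>x_w$, so the Perron gap cannot be too small. This is where the refined Perron bounds of Lemma \ref{lem-3-7} and the tight degree estimates from Lemma \ref{lem-3-12} must be carefully combined with the $C_{2k+1}^{\square}$-free spectral lower bound from Lemma \ref{lem-3-1}.
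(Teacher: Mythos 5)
Your surgery (replacing $N_G(w)$ by $N_G(u^*)\setminus\{w\}$ for one vertex $w\in W_H$) is a genuinely different route from the paper's, which instead completes the bipartite graph between $V_1$ and $V_2$ and deletes the interior edges except those at $w_0$ and inside $V_1\setminus W_1^*$; but your route has a gap I do not see how to close, namely the strict spectral increase. Your identity gives $\mathbf{x}^TA(G')\mathbf{x}-\mathbf{x}^TA(G)\mathbf{x}=2x_w\bigl(\lambda(G)(1-x_w)-x_w[w\sim u^*]\bigr)$, so you need $1-x_w$ to be strictly positive, and in the adjacent case larger than $x_w/\lambda(G)\approx 2/n$. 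None of the quoted lemmas delivers this. Lemma \ref{lem-3-7} only bounds Perron entries from below; Lemma \ref{lem-3-12} gives $d(w)\le(\tfrac12+12\sqrt{\varepsilon})n$, and combined with $\lambda(G)\ge n/2$ from Lemma \ref{lem-3-1} this yields only $x_w\le 1+24\sqrt{\varepsilon}-O(1/n)$, which is vacuous: every estimate available at this stage carries $O(\sqrt{\varepsilon})$ slack, while you need accuracy $O(1/n)$. Nothing proved so far excludes $x_w$ equal to, or within $o(1/n)$ of, $x_{u^*}=1$ (no upper bound on $x_{v^*}$ below $1$ has been established at this point), in which case your bracket is $\le 0$ when $w\sim u^*$, or $=0$ when $x_w=1$ and $w\not\sim u^*$, and the Rayleigh quotient gives no contradiction; Lemma \ref{lem-2-3} cannot rescue the equality case because $N_{G'}(w)\not\supseteq N_G(w)$. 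This is precisely why the paper's surgery is organised around the bound $d_{V_2}(w_0)\le(\tfrac14+6\sqrt{\varepsilon})n$ of Lemma \ref{lem-3-12}: joining $w_0$ to all of $V_2$ gains about $n/4$ new cross edges, each weighted at least $\tfrac12-47\sqrt{\varepsilon}$ by Lemma \ref{lem-3-7}, a gain of order $n$ that swamps all $O(\sqrt{\varepsilon}n)$ losses; your neighbourhood swap has no comparable guaranteed gain.

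A second, smaller but real, gap is the freeness check when $u^*\in V(T)$. The three $T$-neighbours $a,b,c$ of $w$ lie in $N_G(u^*)$, which may contain vertices of $W$ and vertices from both $V_1$ and $V_2$; the common-neighbour re-embedding used in Lemmas \ref{lem-3-5} and \ref{lem-3-8} requires the three vertices to avoid $W$ and to have degree at least $(\tfrac12-10\sqrt{\varepsilon})n$ into a single part, which you cannot guarantee here (in Lemma \ref{lem-3-5} this was arranged by rewiring $v_0$ only into $V_2\setminus(W\cup L)$). If, say, $a\in V_1$ while $b,c\in V_2$, or $a\in W$, the inclusion--exclusion bound you would need can be vacuous, so ``in the spirit of Lemma \ref{lem-3-8}'' does not suffice. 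A final cosmetic point: you apply the degree bound of Lemma \ref{lem-3-12} to your chosen $w\in W_H\setminus\{u^*\}$, which requires $w\in W_1^*$; this does hold (connectivity and bipartiteness of $H$ give every vertex of $W_H$ a common $V_1$-neighbour with another vertex of $W_H$), but you verified it only for one pair $w_1,w_2$, so the choice of $w$ should be tied to that observation.
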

\begin{proof}
Suppose to the contrary that $\mathcal{X}^{(1,o)}\cup \mathcal{X}^{(2,o)}\ne\emptyset$, say $H$ induces a component in $\mathcal{X}^{(1,o)}$.  From Corollary \ref{cor-3-10}, we get $H\subseteq W_1\cup N_{V_1}(W_1)$. Let $W_H=H\cap W_1$ and $\overline{W_H}=H\cap N_{V_1}(W_1)$. Clearly, $W_H\subseteq W_1^*$ and $|W_H|\ge 2$. For a given vertex $w_0\in W_H$, we define the graph $G'$ with vertex set $V(G)$ and edge set 
\[E(G')=\{v_1v_2\mid v_i\in V_i\text{ for }i\in\{1,2\}\}\cup E(G(V_1\setminus W_1^*))\cup \{w_0v\mid v\in N(w_0)\}.\]
Since $G'[V_1]$ is a union of some stars an triangles and $G'[V_1]$ is an empty graph, when we the vertices in $V_1$ red and those in $V_2$ blue, there is either monochromatic path $P_4$ nor a cycle $C_4=uvwz$ with $u$ and $v$ are red while $w$ and $z$ are blue. Therefore, Corollary \ref{cor-2-5} indicates that $G'$ is also $C_{2k+1}^{\square}$-free. We derive a contradiction by showing $\lambda(G') > \lambda(G)$, which is established by the inequality $\mathbf{x}^T(A(G')-A(G))\mathbf{x} > 0$.

For convenience, all symbols with a prime superscript denote the corresponding objects in $G'$. For instance, $N'(v)$ represents the neighborhood of $v$ in $G'$, $d'(v)$ denotes the degree of $v$ in $G'$, and so on. Note that
\begin{equation}\label{eq-f-1}
\begin{array}{lll}
&&(\mathbf{x}^T(A(G')-A(G))\mathbf{x})/2=\left(\sum_{uv\in E(G')}x_ux_v-\sum_{uv\in E(G)}x_ux_v\right)\\[2mm]
&\ge&\sum_{v\in V_2\setminus N_{V_2}(w_0)}x_{w_0}x_v+\sum_{w\in W^*\setminus\{w_0\}}x_w\left(\sum_{v\in N'(w)}x_v-\sum_{v\in N(w)}x_v\right)\\[2mm]
&&+\left(\sum_{v\in V_2\setminus W_2^*}x_v\left(\sum_{u\in V_1\setminus N_{V_1}(v)}x_u-\sum_{u\in N_{V_2}(v)}x_u\right)-\sum_{uv\in E(G[V_2\setminus W_2^*])}x_ux_v\right).
\end{array}
\end{equation}
From Lemmas \ref{lem-3-7} and \ref{lem-3-12}, the first term in the right hand side of Eq.\eqref{eq-f-1} is lower bounded by
\begin{equation}\label{eq-f-2}
\sum_{v\in V_2\setminus N_{V_2}(w_0)}x_{w_0}x_v>\left(\left(\frac{1}{2}-\frac{3}{2}\sqrt{\varepsilon}\right)-\left(\frac{1}{4}+6\sqrt{\varepsilon}\right)\right)n(\frac{1}{2}-7\sqrt{\varepsilon})^2>\frac{1}{17}n.
\end{equation}
From Lemmas \ref{lem-3-7} and \ref{lem-3-12}, the second term is lower bounded by 
\begin{equation}\label{eq-f-3}
\begin{array}{lll}
&&\sum_{w\in W^*\setminus\{w_0\}}x_w\left(\sum_{v\in N'(w)}x_v-\sum_{v\in N(w)}x_v\right)\\[2mm]
&\ge&\sum_{w\in W^*\setminus\{w_0\}}x_w\left(\left(\left(\frac{1}{2}-\frac{3}{2}\sqrt{\varepsilon}\right)-\left(\frac{1}{4}+6\sqrt{\varepsilon}\right)\right)n(1-92\sqrt{\varepsilon})x_{v^*}-\left(\frac{1}{4}+6\sqrt{\varepsilon}\right)nx_{v^*}\right)\\[2mm]
&\ge&\sum_{w\in W^*\setminus\{w_0\}}(-37\sqrt{\varepsilon})nx_wx_{v^*}\ge-259\sqrt{\varepsilon}n.
\end{array}
\end{equation}
In what follows, we consider the third term in the right hand side of Eq.\eqref{eq-f-1}.

Define the set $T=\{v\in V_2\mid N(v)\cap N_{V_2}(w)\ne\emptyset~\text{for some} ~w\in W_H\}$. We claim, for any $v\in T$, that
\begin{equation}\label{eq-a-1}\frac{n}{4} +16\sqrt{\varepsilon} n\geq d_{V_1}(v)\geq d_{V_2}(v) \geq \frac{n}{4} - 24\sqrt{\varepsilon} n.\end{equation}
Suppose to the contrary that $d_{V_1}(v)>\left(\frac{1}{4} +16\sqrt{\varepsilon}\right) n$. Assume that $v\sim v_1$ where $v_1\in N_{V_2}(w)$ for some $w\in W_H$. According to Lemma \ref{lem-3-12}, we get 
\[|N_{V_1}(w)\cap N_{V_1}(v)|\ge \left(\frac{1}{4}-14\sqrt{\varepsilon}\right)n+\left(\frac{1}{4} +16\sqrt{\varepsilon}\right) n-\left(\frac{1}{2} +\frac{3}{2}\sqrt{\varepsilon}\right) n>0.\]
This means that there exists $v_2\in N_{V_1}(w)\cap N_{V_1}(v)$. Therefore, $\{w,v_1,v_2,v\}$ form a $C_4$ as defined in Lemma \ref{lem-3-8}. Thus, $d_{V_1}(v)\le \left(\frac{1}{4} +16\sqrt{\varepsilon}\right) n$, which in turn leads to 
\[d_{V_2}(v)\ge \left(\frac{1}{2} -8\sqrt{\varepsilon}\right) n-\left(\frac{1}{4} +16\sqrt{\varepsilon}\right) n=\left(\frac{1}{4} -24\sqrt{\varepsilon}\right) n.\]
This means $T\subseteq W_2$. For $w_1,w_2\in W_H$ satisfying that $w_1$ and $w_2$ sharing a common neighbor in $V_1$, we have
\[(N_{V_2}(w_1)\cup N_{V_2}(w_2))\ge 2\left(\frac{1}{4}-14\sqrt{\varepsilon}\right) n=\left(\frac{1}{2}-28\sqrt{\varepsilon}\right) n.\]
Therefore, we get
\begin{equation}\label{eq-f-4}|V_2\setminus T|\le \left(\frac{1}{2}+\frac{3}{2}\sqrt{\varepsilon}\right) n-\left(\frac{1}{2}-28\sqrt{\varepsilon}\right) n\le 30\sqrt{\varepsilon}n,\end{equation}
and 
\[|N_{V_2}(w)\cap(N_{V_2}(w_1)\cup N_{V_2}(w_2))|\ge 3\left(\frac{1}{4}-14\sqrt{\varepsilon}\right) n-\left(\frac{1}{2}+\frac{3}{2}\sqrt{\varepsilon}\right) n>0 \]
for any $w\in W_2^*$. This implies $W_2^*\subseteq T$. Moreover, we have 
\begin{equation}\label{eq-a-0}|T\setminus W_2^*|\le 2.
\end{equation} Otherwise, there exist $v_1,v_2,v_3\in T\setminus W_2^*$. Since $d_{V_2}(v_i)\ge \frac{n}{4} - 24\sqrt{\varepsilon} n$, we get $\sum_{i=1}^3d_{V_2}(v_i)>|V_2|$. It implies that at least two of $v_1,v_2,v_3$ belong to $W_2^*$, a contradiction. Therefore, combining Eqs.\eqref{eq-a-1}, \eqref{eq-f-4}, and \eqref{eq-a-0}, we get
\begin{equation}\label{eq-f-5}\begin{array}{lll}
&&\left(\sum_{v\in V_2\setminus W_2^*}x_v\left(\sum_{u\in V_1\setminus N_{V_1}(v)}x_u-\sum_{u\in N_{V_2}(v)}x_u\right)-\sum_{uv\in E(G[V_2\setminus W_2^*])}x_ux_v\right)\\[2mm]
&\ge &\sum_{v\in T\setminus W_2^*}x_v\left(\sum_{u\in V_1\setminus N_{V_1}(v)}x_u-\sum_{u\in N_{V_2}(v)}x_u\right)-\sum_{uv\in E(V_2\setminus T)x_ux_v}\\[2mm]
&\ge&\sum_{v\in T\setminus W_2^*}x_v\left(\left((\frac{1}{2}-\frac{3\sqrt{\varepsilon}}{2})-(\frac{1}{4}+16\sqrt{\varepsilon})\right)n(1-92\sqrt{\varepsilon})x_{v^*}-(\frac{n}{4}+16\sqrt{\varepsilon}n)x_{v^*}\right)-30\sqrt{\varepsilon}n\\[2mm]
&>&\sum_{v\in T\setminus W_2^*}(-57\sqrt{\varepsilon})nx_vx_{v^*}-30\sqrt{\varepsilon}n\ge -144\sqrt{\varepsilon}n.`
\end{array}\end{equation}
By combining Eqs.\eqref{eq-f-1}, \eqref{eq-f-2}, \eqref{eq-f-3}, and \eqref{eq-f-5}, we get
\[(\mathbf{x}^T(A(G')-A(G))\mathbf{x})/2>\frac{1}{17}n-259\sqrt{\varepsilon}n-144\sqrt{\varepsilon}n>0.\]

The proof is completed.
\end{proof}

Now, we are ready to show our main result.
\begin{proof}[{\bf{Proof of Theorem \ref{thm-1-2}}}]
It only needs to prove that $ G = K_1 \vee T_{n-1,2} $. It follows from Lemma \ref{lem-3-14} that all connected components of $ G[V_1] $ and $ G[V_2] $ are either triangles or stars. We will proceed with the proof by considering the following two cases.

{\flushleft\bf Case 1.}  $V_1$ or $V_2$ is an independent set.

Without loss of generality, let $V_2$ be independent. First, we claim that each vertex in $V_1$ is adjacent to every vertex in $V_2$. Otherwise, we construct $G_1$ from $G$ by adding a missing edge between $V_1$ and $V_2$. Then $G_1$ is also $C_{2k+1}^{\square}$-free by Corollary \ref{cor-2-5} but $\lambda(G_1) > \lambda(G)$, a contradiction. 

Next, we claim that there is only one star in $G[V_1]$ as a component, where an isolated vertex is viewed as a star on one vertex. Otherwise, assume that there are two stars in $G[V_1]$ with centers $v_1$ and $v_2$ satisfying $x_{v_1}\ge x_{v_2}$. Define the graph $G_2$ as
\[G_2=G-\{uv_2\mid u\in N(v_2)\}\cup\{uv_1\mid u\in N(v_2)\cup\{v_2\}\}.\] 
 Then $\lambda(G_2) > \lambda(G)$ and $G_2$ is also $C_{2k+1}^{\square}$-free, a contradiction. 
 
Assume that $G[V_1]=rC_3\cup K_{1,s}$ and $n_2=|V_2|$, where $3r+s+1+n_2=n$. Therefore, $G=(rC_3\cup K_{1,s})\vee n_2K_1$.  Thirdly, we claim that $r=0$. According Lemma \ref{lem-2-2}, the spectral radius $\lambda(G)$ is just the largest root of 
  \[f_{r}(x)=x^4 -2x^3-(n_2(s+3r+1)+s)x^2+2(n_2+s)x+3n_2sr+4n_2s.\]
If $r\ge 1$, then 
\[f_r(x)-f_0(x)=3r(x^2-2x+n_2(n-n_2-3r-5))\]
which is positive whenever $x\ge\lambda(G)\ge n/2$. It implies that $\lambda(K_{1,n-n_2-1}\vee n_2K_1)>\lambda(G)$, a contradiction. Thus, we get $r=0$, that is $G=K_{1,n-n_2-1}\vee n_2K_1=K_1\vee K_{n_1,n_2}$, where $n_1=n-n_1-1$. Furthermore, the spectral radius $\lambda(G)$ is the largest root of 
\begin{equation}\label{eq-g-1}f_0(x)=(x-2)(x^3-(n_2(n-n_2)+(n-n_2-1))x-2).\end{equation}

Finally, we claim $|n_1-n_2|\le 1$. By the symmetry of $n_1$ and $n_2$, we may assume that $n_2\le (n-1)/2$. If $n_1-n_2\ge 2$, let $G_3=K_1\vee K_{n_1-1,n_2+1}$. According to Eq.\eqref{eq-g-1}, the values $\lambda(G)$ and $\lambda(G_3)$ are respectively the largest root of $g(x)$ and $g_3(x)$, where
\[\left\{\begin{array}{l}
g(x)=x^3-(n_2(n_1+1)+n_1)x-2,\\[2mm]
g_3(x)=x^3-((n_2+1)n_1+n_1-1)x-2.
\end{array}\right.\]
Since $g(x)-g_3(x)=(n_1-n_2-1)x>0$ whenever $x>0$, we get $\lambda(G_3)>\lambda(G)$, a contradiction. Hence, $G=K_1\vee T_{n-1,2}$.

{\flushleft\bf Case 2.} Neither $V_1$ nor $V_2$ is an independent set.

Recall that $x_{u^*}=\max\{x_v\mid v\in V(G)\}=1$. 
{\flushleft\bf Subcase 2.1} $x_{u^*} \geq (3/2+50\sqrt{\varepsilon})x_{v^*}$.

Without loss of generality, we may assume $u^* \in V_1$. First, we claim that $d_{V_1}(u^*) \geq \frac{n}{4} + 20\sqrt{\varepsilon} n$, and thus $u^*\in W$. Suppose on the contrary that $d_{V_1}(u^*) < \frac{n}{4} + 20\sqrt{\varepsilon} n$. Since $3x_{v^*}\ge 3(1/2-47\sqrt{\varepsilon})>1$, by Lemma \ref{lem-3-4}, we get
\[\begin{array}{lll}
\lambda(G)x_{u^*}&=&\sum_{u\in N(u^*)\cap W}x_u+\sum_{u\in N(u^*)\setminus W}x_u\\[2mm]
&\le& |W|+d(u^*)x_{v^*}\le |W|+(d_{V_1}(u^*)+|V_2|)x_{v^*}\\[2mm]
&<& 3|W|x_{v^*}+(\frac{3n}{4}+22\sqrt{\varepsilon}n) x_{v^*}\\[2mm]
&\le&  (\frac{3n}{4}+25\sqrt{\varepsilon}n) x_{v^*}.
\end{array}\] 
By noticing $\lambda(G)\ge n/2$, we get $x_{u^*} <(\frac{3}{2}+50\sqrt{\varepsilon})x_{v^*}$, contradicting the assumption. 

Let $H$ denote the component containing $u^*$. Since $u^*\in W$, $H$ is a star. Next, we claim that, for any $v\in V_2$, there is no edge between $v$ and $N_{V_2}(u^*)$. Otherwise, assume $v\in V_2$ and $u\in N_{V_2}(u^*)$ with $u\sim v$. Note that $d_{V_1}(v)\ge \frac{n}{4} -4\sqrt{\varepsilon} n$ due to Lemma \ref{lem-3-5} and the maximality of $e(V_1,V_2)$. Since $d_{V_1}(u^*)\geq \frac{n}{4} +20 \sqrt{\varepsilon} n$, we have 
\[|N_{V_1}(u^*)\cap N_{V_1}(v)|\geq d_{V_1}(u^*)+d_{V_1}(v)-(\frac{n}{2}+\frac{3\sqrt{\varepsilon}n}{2})>0.\] 
Therefore, there exists $u'\in N_{V_1}(u^*)\cap N_{V_1}(v)$. Thus, $G$ contains a $C_4=u^*u'vuu^*$ with $u^*,u'\in V_1$ and $v,u\in V_2$, which contradicts Lemma \ref{lem-3-8}. 

Let $G_4 = G -E(G[V_1\setminus V(H)])-E(G[V_2]) + \{u^*v \mid v \in V(G)\setminus N(u^*)\}$. Note that, Corollary \ref{cor-2-5} indicates that $G_4$ is $C_{2k+1}^{\square}$-free, and Lemma \ref{lem-2-3} implies $\lambda(G_4) >\lambda(G)$. This contradicts the maximality of $\lambda(G)$.

{\flushleft\bf Subcase 2.2} $x_{u^*} \leq (3/2+50\sqrt{\varepsilon})x_{v^*}$.

Let $Q_{min} $ be a non-trivial component with minimum edges. Without loss of generality, assume $Q_{min} \in V_1$. First, we assert that $|\mathcal{X}^{(2)}| = 1$. Otherwise, assume there exist two non-trivial components $A, B \in \mathcal{X}^{(2)}$. By Lemma \ref{lem-3-13} we get $e(A, Q_{min}) \leq |A||Q_{min}|-|Q_{min}|$ and $e(B, Q_{min}) \leq |B||Q_{min}|-|Q_{min}|$. Let 
\[G_5 = G - E(Q_{min}) + E(V(Q_{min}),V(A)\cup V(B)).\]
 It is obvious that $G_5$ is still $C_{2k+1}^{\square}$-free by Corollary \ref{cor-2-5}. Moreover, since at most one vertex of $Q_{min}$ belongs to $W$, we get 
\[
\mathbf{x}^TA(G_5)\mathbf{x}-\mathbf{x}^TA(G)\mathbf{x}\geq 2\left(2|Q_{min}|(1-92\sqrt{\varepsilon})^2x^2_{v^*}-|Q_{min}|(\frac{3}{2}+50\sqrt{\varepsilon})x_{v^*}^2\right)>0.
\]
By Lemma \ref{lem-2-3}, we have $\lambda(G_5) > \lambda(G)$, a contradiction. 

Let $Q$ be the unique non-trivial component in $G[V_2]$. Next, we claim that $Q=K_{1,s}$ with $s\ge 3$. Otherwise, $Q\in\{C_3,K_{1,2},K_{1,1}\}$. It means that $Q_{min}\in \{K_{1,3},C_3,K_{1,2},K_{1,1}\}$. If $Q_{min}\ne K_{1,3}$,  let 
\[G_6=G-E(Q_{min})+E(V(Q_{min}),V(Q)).\]
Clearly, $G_6$ is still $C_{2k+1}^{\square}$-free. Moreover, from Lemma \ref{lem-3-13}, we have 
\[e(V(Q_{min}),V(Q))\le |V(Q_{min})|\cdot |V_Q|-(e(Q_{min})+1).\] 
Thus, we get \[\mathbf{x}^TA(G_6)\mathbf{x}-\mathbf{x}^TA(G)\mathbf{x}\geq (e(Q_{min})+1)(1-92\sqrt{\varepsilon})^2x^2_{v^*}-e(Q_{min})x^2_{v^*}>0.\] 
By Lemma \ref{lem-2-3}, we have $\lambda(G_6) > \lambda(G)$, a contradiction. If $Q_{min}=K_{1,3}$, then we could regard $Q$ as $Q_{min}$. Therefore, $K_{1,3}$ is the unique non-trivial component of $G[V_1]$. Thus, the claim holds by exchanging the index of $V_1$ and $V_2$.

Assume $V(Q)=V(K_{1,s})=\{v,v_1,\ldots,v_s\}$ with center $v$. If $v$ is not adjacent to any non-trivial component in $V_1$, then $G[V_1 \cup \{v'\}]$ is a union of triangles and stars, and $V_2 \setminus \{v'\}$ is an independent set. Therefore, applying the same method as in Case 1 shows that G must be isomorphic to $K_1\vee T_{n-1,2}$. It remains to consider the case that $v$ is adjacent some vertex $u\in V(H)$ for some $H\in \mathcal{X}^{(1)}$. 

Suppose that $H=C_3$. Assume $V(H)=\{u_0,u_1,u_2\}$ and $u=u_0$. Then $v_i\not\sim u_j$ for $1\le i\le s$ and $1\le j\le 2$ according to Lemma \ref{lem-3-8} (i). Let \[G_7=G-E(H)+\{v_iu_j\mid 1\le i\le s,1\le j\le 2\}.\]
Clearly, $G_7$ is $C_{2k+1}^{\square}$-free. Moreover, since $V(H)\cap W=\emptyset$, we have
\[\mathbf{x}^T(A(G_7)-A(G))\mathbf{x}\ge 2s(1-92\sqrt{\varepsilon})^2x_{v^*}^2-3x_{v^*}^2>0,\]
which implies $\lambda(G_7)>\lambda(G)$, a contradiction.

Suppose that $H=K_{1,t}$. Assume $V(H)=\{u_0,u_1,\ldots,u_t\}$ with center $u_0$. If $u=u_0$, then $v_i\not\sim u_j$ for $1\le i\le s$ and $1\le j\le t$ according to Lemma \ref{lem-3-8} (i). Let \[G_8=G-E(H)+\{v_iu_j\mid 1\le i\le s,1\le j\le t\}.\]
Clearly, $G_8$ is $C_{2k+1}^{\square}$-free. Moreover, since $|V(H)\cap W|\le 1$, we have
\[\mathbf{x}^T(A(G_8)-A(G))\mathbf{x}\ge st(1-92\sqrt{\varepsilon})^2x_{v^*}^2-t(\frac{3}{2}+50\sqrt{\varepsilon})x_{v^*}^2>0,\]
which implies $\lambda(G_8)>\lambda(G)$, a contradiction. Now we suppose that $v\not\sim u_0$. Therefore, $u\in\{u_1,\ldots,u_t\}$, say $u=u_1$. We show that there is no edges between $v$ and $H'$ for any $H'\in \mathcal{X}^{(1,*)}\setminus\{H\}$. Otherwise, assume there is an edge between $v$ and $H'=K_{1,r}$. Let $V(H')=\{u'_0,u'_1,\ldots,u'_r\}$ where $u'_0$ is the center. By arguments above, we already proved that $v\not\sim u'_0$. We may assume that $v\sim u'_1$. Let 
\[G_9=G-E(Q)+\{v_iu_0\mid 1\le i\le s\}+\{v_iu'_0\mid 1\le i\le s\}.\]
It is obvious that $G_9$ is $C_{2k+1}^{\square}$-free. Moreover, since $|V(Q)\cap W|\le 1$, we have
\[\mathbf{x}^T(A(G_9)-A(G))\mathbf{x}\ge 2s(1-92\sqrt{\varepsilon})^2x_{v^*}^2-s(\frac{3}{2}+50\sqrt{\varepsilon})x_{v^*}^2>0.\]
This implies $\lambda(G_9)>\lambda(G)$, a contradiction. Finally, if $x_{u_0}\ge x_v$,  let 
\[G_{10}=G-\{vw\mid w\in N_{V_2}(v)\}+\{u_0w\mid w\in N_{V_2}(v)\}.\]
Similarly, we get $G_{10}$ is $C_{2k+1}^{\square}$-free and $\lambda(G_{10})>\lambda(G)$, a contradiction. If $x_v\ge x_{u_0}$, we could yields the same construction by constructing $G_{11}=G-\{u_0w\mid w\in N_{V_2}(u_0)\}+\{vw\mid w\in N_{V_2}(u_0)\}$.

The proof is completed.
\end{proof}   

\section*{Acknowledgments}
This work is supported by NSFC (No. 12371362).

\section*{Declaration of competing interest}
We declare that we have no conflict of interest to this work.

{\small}
\end{document}